\theoremstyle{plain}
\newtheorem{theorem}{Theorem}[section]
\newtheorem{lemma}[theorem]{Lemma}
\newtheorem{definition}[theorem]{Definition}
\newtheorem{remark}[theorem]{Remark}
\newtheorem{example}[theorem]{Example}
\newtheorem{proposition}[theorem]{Proposition}
\numberwithin{equation}{section}
\DeclareMathOperator{\sgn}{sgn} 
\DeclareMathOperator{\loc}{loc}
\DeclareMathOperator{\disc}{disc}
\DeclareMathOperator{\ess}{ess}
\DeclareMathOperator{\ran}{ran}
\DeclareMathOperator{\dom}{dom}
\def\R{\mathbb R}
\def\C{\mathbb C}
\def\wt{\widetilde}
\def\vphi{\varphi}
\def\la{\lambda}
\def\H{\mathcal{H}}
\def\K{\mathcal{K}}
\def\pr{\mathcal{P}}
\def\uph{\upharpoonright}
\def\Hte{\mathcal H}
\def\btel{\langle}
\def\bter{\rangle}
\def\T{\mathcal A}
\begin{document}

\title{Abstract kinetic equations with positive collision operators}

\author{
I.M. Karabash \\
(the University of Calgary, Canada)}

\date{}

\maketitle

\begin{abstract}
We consider "forward-backward" parabolic equations in the abstract form
$Jd \psi / d x + L \psi = 0$, $ 0< x < \tau \leq \infty$, where $J$ and $L$ are operators in a Hilbert space $H$ such that $J=J^*=J^{-1}$, $L=L^* \geq 0$, and $\ker L = 0$.
The following theorem is proved: if the operator $B=JL$ is similar to a self-adjoint operator, then associated half-range boundary problems have unique solutions. 
We apply this theorem to corresponding nonhomogeneous equations, to the time-independent Fokker-Plank equation
$ \mu \frac {\partial \psi }{\partial x} (x,\mu ) =  
b(\mu) \frac {\partial^2 \psi}{\partial \mu^2} (x, \mu )$, \ $ 0<x<\tau$, \ $ \mu \in \R$, as well as to other parabolic equations of the "forward-backward" type.
The abstract 
kinetic equation $ T d \psi/dx = - A \psi (x) + f(x)$, where $T=T^*$
is injective and $A$ satisfies a certain positivity assumption, is considered also.
\end{abstract}

\quad

MSC-class: 47N55, 35K70, 47B50 (Primary) 35M10, 35K90 (Secondary)

\quad

Keywords: {forward-backward parabolic equations,  kinetic equations, transport equations, 
equations of mixed type, J-self-adjoint operator, similarity, regular and singular critical points}

\quad

\section{Introduction}

Consider the equation 
\begin{gather}
w(\mu) \frac {\partial \psi }{\partial x} (x,\mu ) =  
\frac {\partial^2 \psi}{\partial \mu^2} (x, \mu ) - 
q(\mu) \psi (x, \mu)  
\qquad ( 0<x<\tau<\infty, 
\ \mu \in \R ),
\label{e FBSL} 
\end{gather}
and the associated boundary value problem 
\begin{gather}
\psi (0, \mu) = \vphi_+ (\mu) \qquad \text{if} 
\quad \mu >0,%w(\mu) >0 , 
\qquad
\psi (\tau, \mu) = \vphi_- (\mu) \qquad \text{if} 
\quad \mu <0.%w(\mu) <0 . 
\label{e FBBC} 
\end{gather}
Here $w$ and $q$ are locally summable on $\R$ and $\mu w(\mu) > 0$,\ $ \mu \in \R$. So the weight function $w$ changes its sign at $0$.
We assume also that the Sturm-Liouville operator 
\begin{gather} \label{e L} 
L : y \mapsto \frac 1{|w|}( -y'' +qy) 
\end{gather}
defined on the maximal domain in the Hilbert space $L^2 (\R, |w(x)|dx)$ is self-adjoint.  
Boundary value problems of this (forward-backward) type arise as various 
kinetic equations (e.g., \cite{BP83,KLH82,GvdMP87,S95}; for other applications see \cite{LNYan83,T85,PA02} and references).

In this paper we will consider the 
the following abstract version of equation \eqref{e FBSL}:%-\eqref{e FBBC}
\begin{gather}
\frac{d \psi}{ dx} = - JL \psi (x) \qquad (0<x<\tau<\infty) .
          \label{e PsiEq}
\end{gather}
Here $L$ and $J$ are operators in an abstract Hilbert space $H$ such that $L$ is a self-adjoint (bounded or unbounded) operator and 
$J$ is a \emph{signature operator} in $H$, that is $J=J^*=J^{-1}$. 

By $P_\pm$ we denote the orthogonal projections onto 
$H_\pm := \ker (J \mp I)$.
Clearly, 
\[
H=H_+ \oplus H_-  \quad \text{and}  \quad J=P_+ \oplus P_-.
\] 
The aim is to find \emph{strong solutions} of the associated boundary problem, i.e., to find continuous functions 
$\psi : [0,\tau] \rightarrow H$ ($\psi \in C ([0,\tau];H)$) which is strongly continuously differentiable on 
$(0,\tau)$ ($\psi \in C^1 ((0,\tau);H)$)
and  satisfies Eq. \eqref{e PsiEq} with the following boundary conditions 
\begin{gather}
P_+ \psi (0) = \vphi_+  , \qquad 
P_- \psi (\tau) = \vphi_-
          \label{e PsiBC} ,
\end{gather}
where $\vphi_+ \in H_+ $ and $\vphi_- \in H_- $ are given vectors. 
If we define $L$ by \eqref{e L} and put 
\begin{gather} \label{e HJ}
H=L^2 (\R, |w(\mu)|d\mu), \qquad
(Jf)(\mu):= (\sgn \mu) f(\mu), 
\end{gather}
we get problem \eqref{e FBSL}-\eqref{e FBBC}.

For the case when $L$ is nonnegative and 
has discrete spectrum, problem \eqref{e PsiEq}--\eqref{e PsiBC} has been described in great detail 
(see \cite{BG68,Pag76,BP83,Pyat84,B85,GvdMP87,KvdMP87,Pyat02} and references therein).
For methods used in these papers, the assumption  
%\begin{gather} \label{e discL}
$\sigma (L) = \sigma_{disc} (L)$ \quad 
%\text{
or the weaker assumption
%} \quad
$\inf \sigma_{ess} (L) >0$ 
%\end{gather} 
is essential. 
Generally, the latter assumption is not fulfilled for Eq. \eqref{e FBSL}.
The simplest example is the equation 
\begin{gather}
(\sgn \mu) |\mu|^\alpha \frac {\partial \psi }{\partial x} (x,\mu ) =  
\frac {\partial^2 \psi}{\partial \mu^2} (x, \mu ) 
\qquad ( 0<x<\tau \leq \infty, \ \mu \in \R ), \quad \alpha>-1, 
\label{e |mu|p}
\end{gather}
which arises in kinetic theory and in the theory of stochastic processes (see \cite{Pag75,GvdMP87,S95} and references in \cite{Pag74}). 
Indeed, for this equation the operator $L= -|\mu|^{-\alpha} \frac{d^2}{d \mu^2}$ is self-adjoint in $L^2 (\R, |\mu|^\alpha d\mu)$ and $\sigma (L) = \sigma_{ess} (L) = [0,+\infty)$.
%If , the boundary conditions for Eq. \eqref{e |mu|p} have the %form \eqref{e FBBC}. 

In the case $\tau < \infty$, problem \eqref{e |mu|p}, \eqref{e FBBC} was studied in \cite{T85} with $\alpha=0$.
%\begin{gather}
%\psi (0, \mu) = \vphi_+ (\mu) \quad \text{if} 
%\quad \mu >0 , \quad
%\psi (\tau, \mu) = \vphi_- (\mu) \quad \text{if} 
%\quad \mu <0, \quad \vphi_\pm \in L^2 (\R_\pm, |\mu|^p d\mu),
%\label{e pBC} .
%\end{gather}
In the case $\tau= \infty$ (the half-space problem), one has a boundary condition of the type \eqref{e FBBC} %\eqref{e pBC} 
at $x=0$ and, in addition, a growth condition on $\psi(x,\mu)$ for large $x$. The half-space problem for Eq. \eqref{e |mu|p} was considered 
in \cite{Pag74,Pag75,G75} in connection with stationary equations of Brownian motion. Note  that the methods of \cite{Pag74,Pag75,G75,T85} use the special form of the weight $w$ and corresponding integral transforms.
The results achieved in \cite{Pag75} for the sample case $\alpha=1$
was used in \cite{Pag76}, where a wider class of problems was considered under the hypotheses that 
\begin{gather}
\text{the weight} \quad w \quad
\text{is bounded} \label{e wbound}
\end{gather}
and $w(\mu) = \mu + o(\mu)$ as $\mu \to 0$.
However, all these results were obtained 
%papers \cite{Pag74,Pag75,G75,Pag76}
under additional assumptions on the boundary data. In particular, it was supposed that $\vphi_\pm$ are continuous.
 
The case when $L$ may be unbounded and may have a continuous spectrum was considered in \cite[Section 4]{GGvdM88},
%(see also \cite{B79} for the case of bounded collision operators), 
where the half-space problem is studied (in an abstract setting) under the assumptions \eqref{e wbound} and $L > \delta >0$.
This assumption was changed to $L>0$ in \cite{C00}. However, it is difficult to apply the results of \cite{C00} to equation \eqref{e |mu|p} since an additional assumption on the boundary values $\phi_\pm$ appears. This assumption is close to assumption 
of \cite{Pag74,Pag75,Pag76}.
The method of \cite{GGvdM88,C00} is based on 
the spectral theory of self-adjoint operators in Krein spaces. 

The aim of this paper is to modify the Krein space approach of \cite{GGvdM88,C00} and to prove that problem \eqref{e PsiEq}-\eqref{e PsiBC} has a unique solution for arbitrary 
$\vphi_\pm  \in H_\pm $.  
In particular, it will be shown that the problem \eqref{e |mu|p}, \eqref{e FBBC} has a unique solution for arbitrary $\vphi_\pm \in L^2 (\R_\pm, |\mu|^\alpha d\mu)$. More general equations of the Fokker-Plank type will be considered also.

Recall that two closed operators $T_1$ and $T_2$ in a Hilbert space $H$ are called similar 
if there exist a bounded and boundedly invertible operator $S$ 
in $H$ such that $S\dom(T_1) = \dom(T_2)$ and 
$T_2 = S T_1 S^{-1}$. 
%If the spectrum $\sigma(T)$ is real and descrete, 
%then the similarity of $T$ to a self-adjoint operator is 
%equivalent to the Riesz basis property of eigenfunctions.

The central result is the following theorem.

\begin{theorem} \label{t EU} 
Assume that the operator $L$ is self-adjoint and positive %$L > 0$ 
(i.e., $L \geq 0$ and $\ker L = 0$). Assume that 
the operator $JL$ is similar to a self-adjoint operator in the Hilbert space $H$. 
Then for each pair $\{\vphi_+,\vphi_-\}$, 
$\vphi_\pm  \in H_\pm $, there is a unique strong solution $\psi $ 
of problem \eqref{e PsiEq}-\eqref{e PsiBC}.  
This solution is given by \eqref{e sol}.
\end{theorem}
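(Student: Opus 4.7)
\emph{Proof plan.} The strategy is to transport the problem into the Krein space $(H,[\cdot,\cdot])$ with $[f,g]:=\langle Jf,g\rangle$. In this language $B:=JL$ is $J$-symmetric, and the hypothesis that $B$ be similar to a self-adjoint Hilbert-space operator is equivalent to $B$ being definitizable with real spectrum and no singular critical points. Hence $B$ admits bounded Krein-space spectral projections $E_+,E_-$ onto the parts of its spectrum in $(0,\infty)$ and $(-\infty,0)$, satisfying $E_++E_-=I$ and $BE_\pm = E_\pm B$. Setting $\mathcal{L}_\pm:=\ran E_\pm$, standard Krein-space theory yields that $\mathcal{L}_+$ is uniformly $J$-positive, $\mathcal{L}_-$ is uniformly $J$-negative, $H = \mathcal{L}_+ + \mathcal{L}_-$ is a (non-orthogonal) direct sum, and $B_+:=B|_{\mathcal{L}_+}$, $B_-:=-B|_{\mathcal{L}_-}$ are each similar to non-negative self-adjoint operators on $\mathcal{L}_\pm$. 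Consequently $T_+(x):=e^{-xB_+}$ and $T_-(x):=e^{-xB_-}$ are uniformly bounded $C_0$-semigroups on $\mathcal{L}_\pm$ for $x\ge 0$.

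\emph{Construction and the boundary system.} I would seek $\psi$ in the form
\[
\psi(x)=T_+(x)\,h_+ + T_-(\tau-x)\,h_-,\qquad h_\pm\in\mathcal{L}_\pm,
\]
which is the formula \eqref{e sol} promised in the statement. Any such $\psi$ lies in $C([0,\tau];H)\cap C^1((0,\tau);H)$ with $\psi(x)\in\dom B$ for $x\in(0,\tau)$ and satisfies \eqref{e PsiEq} by construction. Substituting into \eqref{e PsiBC} gives a $2\times 2$ block system
\begin{align*}
P_+|_{\mathcal{L}_+}\,h_+ + P_+\,T_-(\tau)\,h_- &= \vphi_+,\\
P_-\,T_+(\tau)\,h_+ + P_-|_{\mathcal{L}_-}\,h_- &= \vphi_-,
\end{align*}
acting from $\mathcal{L}_+\oplus\mathcal{L}_-$ to $H_+\oplus H_-$. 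Uniform $J$-definiteness implies that the diagonal blocks $P_\pm|_{\mathcal{L}_\pm}$ are topological isomorphisms (each $\mathcal{L}_\pm$ is the graph of a strict contraction $K_\pm:H_\pm\to H_\mp$), so the problem reduces to invertibility of an operator of the form $I-N(\tau)$ on $H_+$, where $N(\tau)$ is built from $T_\pm(\tau)$ and the angular operators $K_\pm$.

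\emph{Uniqueness via the $J$-energy identity.} Any strong solution $\psi$ of the homogeneous problem ($\vphi_+=\vphi_-=0$) satisfies
\[
\frac{d}{dx}\langle J\psi(x),\psi(x)\rangle
 = \langle J\psi',\psi\rangle+\langle J\psi,\psi'\rangle
 = -2\,\langle L\psi(x),\psi(x)\rangle\le 0.
\]
Together with $\langle J\psi(0),\psi(0)\rangle=-\|\psi(0)\|^2\le 0\le\|\psi(\tau)\|^2=\langle J\psi(\tau),\psi(\tau)\rangle$, this forces $\psi(0)=\psi(\tau)=0$ and $\int_0^\tau\langle L\psi(x),\psi(x)\rangle\,dx=0$; since $L\ge 0$ and $\ker L=0$, we conclude $\psi\equiv 0$. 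This gives both uniqueness of strong solutions and injectivity of the boundary system above.

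\emph{Main obstacle.} The delicate step is \emph{surjectivity} of the boundary system. Because $\sigma(L)$ may accumulate at $0$ (as in \eqref{e |mu|p}), the semigroups $T_\pm(\tau)$ are neither compact nor uniformly exponentially decaying, so a Fredholm alternative is not immediate. I expect the proof to close this by working inside the Krein space: since $B_\pm$ is non-negative self-adjoint with respect to the equivalent inner product $\pm[\cdot,\cdot]|_{\mathcal{L}_\pm}$, the semigroup $T_\pm(\tau)$ is a genuine contraction there, and combining this contractivity with the strict angular-operator bounds $\|K_\pm\|<1$ should yield $\|N(\tau)\|<1$ in a suitable equivalent norm, making $I-N(\tau)$ invertible by a Neumann series. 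This Krein-space contractivity estimate is the technical heart of the argument.
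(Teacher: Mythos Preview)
Your proposal is essentially correct and aligns with the paper's proof. The existence part matches exactly: the paper introduces the spectral decomposition $H=H^B_+[\dot+]H^B_-$ via Proposition~\ref{p cr=sim}, shows $B^\pm$ is $\pm$-self-adjoint in the intrinsic inner product (Proposition~\ref{p BpmS-A}), and then proves precisely your anticipated estimate. Your ``$\|N(\tau)\|<1$ via Krein contractivity plus strict angular-operator bounds'' is the paper's Lemma~\ref{l G}, which combines the semigroup contraction~\eqref{e exp<1} in the intrinsic norm with the bound~\eqref{e R_+<1} coming from Proposition~\ref{p proj}(ii). Invertibility of $I-G_\mp G_\pm$ then yields the explicit solution~\eqref{e sol} via Lemma~\ref{l sys}.

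The one genuine difference is uniqueness. The paper does \emph{not} use your $J$-energy identity; instead it argues structurally that any strong solution decomposes as $\psi=\psi_++\psi_-$ with $\psi_\pm$ necessarily given by the semigroup formulae~\eqref{e Psi+(x)}--\eqref{e Psi-(x)}, and then invokes the unique solvability of system~\eqref{e BC1} (Lemma~\ref{l sys}). Your energy argument $\tfrac{d}{dx}[\psi,\psi]=-2\langle L\psi,\psi\rangle\le 0$ together with the sign constraints at $x=0,\tau$ is a clean, self-contained alternative that bypasses the semigroup representation for the uniqueness half; it also gives injectivity of the boundary system without appealing to Lemma~\ref{l G}. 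Both routes are valid; the paper's is more uniform (one mechanism for both existence and uniqueness), yours separates concerns and makes uniqueness independent of the similarity hypothesis beyond $L>0$.
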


The proof is given in Subsection \ref{ss prTh1}.
The half-space problem ($\tau = \infty$) is considered in Subsection \ref{ss h-s}. In Section \ref{ss cor n-h} we consider correctness and nonhomogeneous equations.

The formal similarity between Eqs. 
\eqref{e FBSL}-\eqref{e FBBC} and certain %the half-space 
problems of neutron transport, radiative transfer, and rarefied 
gas dynamics has given rise to the emergence of abstract 
kinetic equation 
\begin{gather}
T \frac{d \psi}{ dx} = - A \psi (x) \qquad (0<x<\tau \leq \infty) ; \label{e AKE} 
\end{gather}
see \cite{H76,B79,vdM81,KLH82,GvdMP87,GGvdM88} and references therein. When Eq. \eqref{e AKE} is considered in a Hilbert space $H$, the operator $T$ is self-adjoint and injective. The operator $A$ is called 
\emph{a collision} operator, usually it satisfies certain positivity assumptions (see e.g. \cite{GvdMP87}). For unbounded collision operators, equation \eqref{e AKE} is usually considered in the space $\Hte_T$ that is a completion of $\dom (T)$ with respect to (w.r.t.) the scalar product
$\btel \cdot ,\cdot \bter_T := (|T| \cdot , \cdot)_{H}$. 
The interplay of $\dom (T)$ and $\dom(A)$ may be various. This leads to additional assumptions on the operators $T$
and $A$. It is assumed in \cite[Section 4]{GGvdM88} that $T$ is bounded and $A>\delta>0$; in \cite{C00}, $\ran(T) \subset \ran (A)$. Note that equation \eqref{e |mu|p} can not be included in these settings.

The second goal of the present paper is to remove the assumptions 
mentioned above. We will show that the following condition is natural for the case when $A$ is unbounded:
\emph{the operator $A$ is a positive self-adjoint operator from $\Hte_T$ to the space $\Hte_T^\prime$} that is  a completion of $\dom (T^{-1})$ w.r.t. the scalar product
$(|T|^{-1} \cdot , \cdot)_{H}$, see Section \ref{s AKE} for details. It is weaker 
than the assumptions mentioned above. On the other hand, it characterizes the case when equation \eqref{e AKE} may be reduced 
to equation \eqref{e PsiEq}. 

Theorem \ref{t EU} leads to the similarity problem for J-positive differential operators.
In Section \ref{s ex}, we use recent results concerning 
the similarity  \cite{CL89,FN98,KarM04,KosMFAT06,KarM06, KosMZ06,KosDis,KarKosM06} (see also \cite{KarMFAT00,KarMN00,FSh02} and references in \cite{KarM06,KarKosM06}) to prove uniqueness and existence theorems for various equations of the type \eqref{e FBSL}.

Note also that abstract kinetic equations with nonsymmetric  collision operators may be found in \cite{KLH82,GvdMP87,GGvdM88,C00,vdMRR00,PA02}. 
From other point of view, equation \eqref{e FBSL} belongs to the class of second order equations with nonnegative characteristic form. Boundary problems for this class of equations were considered by various authors (see \cite{KN67,OR73} and references). But some restrictions imposed in this theory makes it inapplicable to Eq. \eqref{e FBSL} (see
a discussion in \cite{Pag76}).
The case when $w$ is dependent of $\mu$ or the operator $L$ is dependent of $x$ was considered, e.g., in \cite{Aa98,Paron04}.

The main results of this paper were announced in the short communications \cite{KarGAMM06,KarKr06}.

\textbf{Notation.} 
Let $\T$ be a linear operator from a Banach space $H_1$ to a Banach space $H_2$. 
In what follows, $\dom (\T)$,
$\ker \T$, $\ran \T$, $\| \T \|_{H_1 \to H_2}$ are the domain, kernel, range, and norm of $\T$,
respectively. If M is a subset of $H_1$, 
then $\T M := \{ \T h \, : \, h \in M \}$.
In the case $H_1=H_2$, $\sigma(\T)$ and $\rho (\T)$ denote
the spectrum and the resolvent set of $\T$, respectively. As usual,  $\sigma_{\disc} (\T)$ denotes  the discrete spectrum  of $\T$, that is,  the set of isolated eigenvalues of finite algebraic
multiplicity; the essential spectrum is $\sigma _{\ess} (\T):= \sigma (\T) \setminus
\sigma_{\disc} (\T)$.
%; $\sigma_p (\T)$ stands for the set of eigenvalues. 
By $E^\T (\cdot)$ we denote the spectral function of a self-adjoint (or J-self-adjoint) operator $\T$. 
%where $\| \T \|_\pm$ is a norm of operator $\T$ 
%in the Hilbert space $(H^A_\pm, \pm [\cdot,\cdot])$.  
We write $f \in AC_{\loc} (\R)$ if the function $f$ is 
absolutely continuous on each bounded interval in $\R$. 
Put $\R_+:=(0, +\infty)$, $
\R_-:=(-\infty, 0)$, and $\overline \R := \R \cup{ \infty }$.

\section{Existence and uniqueness of solutions\label{s Jpos}}

\subsection{Preliminaries \label{ss Krein}}

In this section basic facts from the  theory of operators in Krein spaces
are collected. The reader can find more
details in \cite{AzJ89,L82}.

Consider a complex Hilbert space $H$ with a scalar product $(\cdot, \cdot
)_H$ and the norm $\| \cdot \|_H := (\cdot, \cdot)^{1/2}$. 
An operator $\T$ in $H$ is called \emph{positive (negative)} if 
$(\T h,h)_H >0$  (resp., $< 0$) for all $h \in \dom (\T)\setminus \{ 0 \}$.
We write $\T>0$ if $\T$ is positive.  

%\marginpar{vozmozno ne nado}
%A Hermitian sesquilinear form $ \langle \cdot \, ,\cdot
%\rangle $ on a Hilbert space $H $ is said to be \emph{positive
%definite } if $\langle f ,f \rangle >0$ for $f \in
%H\setminus\{0\} $, % $f\neq 0$,
%and it said to be \emph{indefinite} if there exist elements $f,g
%\in H$ such that $\langle f,f \rangle <0$ and $\langle g,g
%\rangle >0$.

Suppose that $H = H_+  \oplus H_- $, where $H_+$ and $H_-$
are (closed) subspaces of $H$. Denote by $P_\pm$ the orthogonal
projections from $H$ onto $H_\pm$. Let $ J=P_+ - P_-$ and 
$[ \cdot, \cdot ] := (J \cdot ,\cdot )_H $. Then the pair $\K =
(\H , [\cdot, \cdot])$ is called a \emph{Krein space} (see
\cite{AzJ89,L82} for the original definition). The form $[
\cdot, \cdot ]$ is called \emph{an inner product} in the Krein
space $\K$ and the operator $J$ is called \emph{a fundamental
symmetry} (or \emph{a signature operator}) in the Krein space $\K$. 
Evidently, $(\H , [\cdot, \cdot])$ is a Hilbert space if and only if $H_- = 0$.
%Evidently, the form
%$[\cdot,\cdot]$ is indefinite on $H$ if and only if $H_+ \neq \{
%0\}$ and $H_- \neq \{ 0\}$.

%\begin{definition} \label{d NonNegSp}
%\begin{description}
%\item[(i)]
%A subspace $H_1 \subset H$ is \emph{non-negative (non-positive)} if
%$[h,h] \geq 0$ ($\leq 0$, resp.) for all $h \in H_1$.
%\item[(ii)] a non-negative (non-positive) subspace $H_1$ is called \emph{uniformly
% positive (uniformly negative, resp.)} 
%if there is a constant $\alpha>0$ such that $\alpha \| h \|_H \leq [h,h] \ $ ($\leq
% -[h,h]$, resp.) 
%for $h \in H_1$.
%\item[(iii)] A non-negative (non-positive) subspace $H_1 $ is called maximal 
% non-negative (non-positive)
%if for any non-negative (non-positive) subspace $H_2 \supset H_1$ we have $H_2 = H_1$. 
%\end{description}
%\end{definition}

A subspace $H_1 \subset H$ is called \emph{non-negative (non-positive)} if
$[h,h] \geq 0$ ($\leq 0$, resp.) for all $h \in H_1$.
A non-negative (non-positive) subspace $H_1$ is  \emph{uniformly positive (uniformly negative, resp.)} 
if there is a constant $\alpha>0$ such that $\alpha \| h \|_H \leq [h,h] \ $ ($\leq -[h,h]$, resp.) for $h \in H_1$.
A non-negative (non-positive) subspace $H_1 $ is called maximal non-negative (non-positive)
if for any non-negative (non-positive) subspace $H_2 \supset H_1$ we have $H_2 = H_1$. 
Subspaces $H_1$ and $H_2$ are called J-orthogonal if $[h_1,h_2]=0$ for all  $h_1 \in \H_1$, $h_2 \in \H_2$. 
We write $H_1= H_2 [\dot +] H_3$ if $H_1$ admits decomposition into the direct sum of two J-orthogonal subspaces $H_2$ and $H_3$. 

%\begin{definition}
%Suppose that subspaces $\H_\pm$ possess the properties
%\begin{description} 
%\item[(i)] $\H_+$ is non-negative, $\H_-$ is non-positive,
%\item[(ii)] $ (\H_+, [\cdot,\cdot])$ and $(\H_-, -[\cdot,\cdot])$ are Hilbert spaces;
%\end{description}
%and suppose that $ H= \H_+ [\dot +] \H_-$. 
%Then this decomposition is called a \emph{canonical decomposition} of the Krein space
%$\K$
%\end{definition}

Suppose that subspaces $\H_\pm$ possess the properties
\begin{description} 
\item[(i)] $\H_+$ is non-negative, $\H_-$ is non-positive,
\item[(ii)] $ (\H_+, [\cdot,\cdot])$ and $(\H_-, -[\cdot,\cdot])$ are Hilbert spaces;
\end{description}
and suppose that $ H= \H_+ [\dot +] \H_-$; 
then this decomposition is called a \emph{canonical decomposition} 
of the Krein space $\K$.

Evidently, $H=H_+ \oplus H_-$ is a canonical decomposition. 
Note that there exist other canonical decompositions (see Proposition \ref{p cr=sim}).

Let $H=\H_+ \oplus \H_-$ be a canonical decomposition. Then the norm  
$\| \cdot \|_{\H_\pm} := \sqrt {\pm [\cdot,\cdot]} $ in the Hilbert space $(\H_\pm, \pm [\cdot,\cdot])$
is called \emph{an intrinsic norm}. 
It is easy to prove (see  \cite[Theorem I.7.19]{AzJ89}) that the norms $\| \cdot \|_{\H_\pm}$ and 
$\| \cdot \|_{H}$ are equivalent on $\H_\pm$; moreover, 
\begin{gather} \label{e EqNorms}
 \gamma \| h_\pm \|_H  \leq \| h_\pm \|_{\H_\pm} \leq \| h_\pm \|_H, \quad h_\pm \in \H_\pm, 
\end{gather} 
where $ \gamma \in (0,1] $ is a constant.

Statement (i) of the following proposition is due to Ginzburg (see 
\cite[Theorems I.4.1 and I.4.5]{AzJ89}), Statement (ii) is due to 
Phillips and Ginzburg
(see \cite[Lemma I.8.1]{AzJ89}). 

\begin{proposition}[e.g. \cite{AzJ89}] \label{p proj}
Let $H=\H_+ \oplus \H_-$ be a canonical decomposition and let $\pr_+$ and $\pr_-$ be corresponding 
mutually complementary projections on $\H_+$ and $\H_-$, respectively.
\begin{description} 
\item[(i)] If $H_1$ is a maximal non-negative subspace in $H$, then the restriction 
$ \pr_+ \upharpoonright H_1 \, : \, H_1 \rightarrow \H_+ $ is a homeomorphism,
that is, it is bijective, continuous, and the inverse mapping 
$(\pr_+ \upharpoonright H_1)^{-1}: \H_+ \to H_1$ is also continuous.
\item[(ii)] If $H_1$ is a uniformly positive subspace in $H$, then there is a constant 
$\beta \in (0,1)$ such that $ |[\pr_- h , \pr_- h]| \leq \beta [\pr_+ h , \pr_+ h]$.
\end{description}
\end{proposition}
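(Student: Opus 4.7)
The plan for statement (i) is to realize $H_1$ as the graph of a contractive ``angle operator'' $K$ mapping part of $\H_+$ into $\H_-$. First I would check that $\pr_+ \upharpoonright H_1$ is injective: writing $h = h_+ + h_-$ with $h_\pm := \pr_\pm h \in \H_\pm$, the non-negativity of $H_1$ reads $\|h_+\|_{\H_+}^2 - \|h_-\|_{\H_-}^2 = [h,h] \geq 0$, so $\pr_+ h = 0$ forces $h_- = 0$ and hence $h = 0$. This lets me define $K : \pr_+ H_1 \to \H_-$ by $K(\pr_+ h) := \pr_- h$, a linear contraction in the intrinsic norms.

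Next I would observe that a maximal non-negative subspace is closed: if $h_n \in H_1$ and $h_n \to h$ in $H$, then continuity of $[\cdot,\cdot]$ shows that $H_1 + \Span\{h\}$ is still non-negative, so $h \in H_1$. Combining this with $\|K\| \leq 1$ and \eqref{e EqNorms}, a Cauchy sequence in $\pr_+ H_1$ forces $\pr_- h_n = K(\pr_+ h_n)$ to be Cauchy, hence $h_n$ converges in $H$ to some $h \in H_1$, and $\pr_+ h$ is the given limit; thus $\pr_+ H_1$ is closed in $\H_+$. To upgrade this to $\pr_+ H_1 = \H_+$, I would argue by contradiction: pick $g_0 \neq 0$ in $\H_+$ that is $[\cdot,\cdot]$-orthogonal to $\pr_+ H_1$ inside the Hilbert space $(\H_+,[\cdot,\cdot])$. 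Then for any $h + \lambda g_0 \in H_1' := H_1 + \Span\{g_0\}$, the $J$-orthogonality of $\H_+$ and $\H_-$ together with $[\pr_+ h, g_0] = 0$ yields
\[
[h + \lambda g_0, h + \lambda g_0] = [h,h] + |\lambda|^2 \|g_0\|_{\H_+}^2 \geq 0,
\]
so $H_1'$ is non-negative and strictly contains $H_1$, contradicting maximality. Finally $(\pr_+ \upharpoonright H_1)^{-1}$ is the map $g \mapsto g + Kg$, whose boundedness from $\H_+$ to $H$ follows from $\|Kg\|_{\H_-} \leq \|g\|_{\H_+}$ and the two-sided estimate \eqref{e EqNorms}.

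For statement (ii) I would simply sharpen the contraction estimate for $K$ using uniform positivity. Since $\pr_\pm$ are bounded operators on $H$, there is a constant $c>0$ such that $\|h\|_H^2 \leq c(\|\pr_+ h\|_{\H_+}^2 + \|\pr_- h\|_{\H_-}^2)$ for all $h \in H$ (via the triangle inequality and \eqref{e EqNorms}). Combined with $[h,h] \geq \alpha \|h\|_H^2$ on $H_1$, this gives
\[
\|\pr_+ h\|_{\H_+}^2 - \|\pr_- h\|_{\H_-}^2 \geq \frac{\alpha}{c}\bigl(\|\pr_+ h\|_{\H_+}^2 + \|\pr_- h\|_{\H_-}^2\bigr),
\]
whence $|[\pr_- h, \pr_- h]| = \|\pr_- h\|_{\H_-}^2 \leq \beta\,\|\pr_+ h\|_{\H_+}^2 = \beta\,[\pr_+ h, \pr_+ h]$ with $\beta := (1 - \alpha/c)/(1 + \alpha/c) < 1$ (after replacing $\alpha$ by $\min(\alpha, c/2)$ if necessary to ensure $\alpha/c < 1$).

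The main obstacle I expect is packaging the extension argument in (i) correctly: one must take $g_0$ orthogonal to $\pr_+ H_1$ in the \emph{intrinsic} inner product on $\H_+$ (rather than in the ambient scalar product of $H$) in order for the cross term to vanish and the extended subspace to remain non-negative. Closedness of $H_1$ and of $\pr_+ H_1$ are supporting lemmas rather than serious difficulties, and part (ii) reduces to a one-line computation once the angle-operator framework from (i) is in place.
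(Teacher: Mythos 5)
The paper itself gives no proof of this proposition: it is quoted from Azizov--Iokhvidov (Theorems I.4.1, I.4.5 and Lemma I.8.1 of \cite{AzJ89}), and the standard proof there is precisely the angle-operator argument you use, so you are not taking a different route. Your proof of (i) is correct and complete: injectivity of $\pr_+\uph H_1$ from non-negativity, the intrinsic-norm contraction $K$, closedness of $H_1$ and hence of $\pr_+ H_1$, and the extension-by-$g_0$ argument give surjectivity, while boundedness of the inverse $g \mapsto g + Kg$ follows from \eqref{e EqNorms}. You are also right that the delicate point is choosing $g_0$ orthogonal to $\pr_+ H_1$ in the \emph{intrinsic} inner product of $\H_+$, so that the cross term $[h,g_0]$ vanishes.

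In (ii), however, there is a direction error in the auxiliary inequality. You invoke $\|h\|_H^2 \leq c\left(\|\pr_+ h\|_{\H_+}^2 + \|\pr_- h\|_{\H_-}^2\right)$, but to pass from $[h,h] \geq \alpha\|h\|_H^2$ to $[h,h] \geq \frac{\alpha}{c}\left(\|\pr_+ h\|_{\H_+}^2 + \|\pr_- h\|_{\H_-}^2\right)$ you need the \emph{opposite} bound, namely $\|\pr_+ h\|_{\H_+}^2 + \|\pr_- h\|_{\H_-}^2 \leq c\,\|h\|_H^2$; as written the two hypotheses you combine do not yield the displayed inequality. The bound you actually need is also true, but it comes from the boundedness of the projections $\pr_\pm$ together with the right-hand inequality $\|\cdot\|_{\H_\pm} \leq \|\cdot\|_H$ in \eqref{e EqNorms}, not from the triangle inequality (which gives the direction you wrote). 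With that one correction the remaining algebra, yielding $\|\pr_- h\|_{\H_-}^2 \leq \beta\,\|\pr_+ h\|_{\H_+}^2$ with $\beta = (1-\alpha/c)/(1+\alpha/c) < 1$, goes through, and the proof of (ii) is otherwise sound.
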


Let $\T$ be a densely defined operator in $H$. The
\emph{J-adjoint operator of} $\T$ is defined by the relation
\[
 \ [\T f,g] =
[f,\T^{[*]}g] \ , \qquad  f \in \dom (\T),
\]
on the set of all $g \in H$ such that the mapping $f \mapsto
[\T f,g]$ is a continuous linear functional on $\dom (\T)$. The
operator $\T$ is called \emph{J-self-adjoint} if $ \T = \T^{[*]} $.
It is easy to see that $\T^{[*]} := J \T^* J$ and the operator $\T$
is J-self-adjoint if and only if $J \T$ is self-adjoint. Note that
$J=J^*=J^{-1}=J^{[*]} $. A closed operator $\T$ is called
\emph{J-positive} if $\ [\T f,f] > 0 \ $ for 
$\ f \in \dom (\T)\setminus \{0\} $ 
(it is equivalent to $J \T > 0$). 
%A boundede projector $P(=P^2)$ is called J-orthogonal if $P=P^{[*]}$.

Let $\mathfrak{S}$ be the semiring consisting of all bounded
intervals with endpoints different from $0$ and
their complements in $\overline \R := \R \cup{ \infty }$.

%\begin{theorem}[\cite{L82}] \label{t Jsp}
%Let $\T$ be a J-positive J-self-adjoint operator in
%$H$ with a nonempty resolvent set, $\rho (\T) \neq
%\emptyset$. \Then:
%\begin{description}
%\item[(i)] The spectrum of $\T$ is real, $\sigma(\T) \subset \R$.
%\item[(ii)] There exist a mapping $\Delta \rightarrow E(\Delta)$
%from $\mathfrak{S}$ into the set of bounded linear operators in
%$H$ with the following properties ($\Delta ,\Delta' \in
%\mathfrak{S}$):
%\begin{description}
%\item[(E1)] $E(\Delta \cap \Delta' ) = E(\Delta) E(\Delta' ) $,
%\quad $E(\emptyset ) = 0, \quad E(\overline{\R}) = I $, \quad
%$E(\Delta) = E(\Delta)^{[*]}$; \item[(E2)] $E(\Delta \cup \Delta'
%) = E(\Delta) + E(\Delta' )$ \quad if \quad $\Delta \cap \Delta' =
%\emptyset$; \item[(E3)] the form $ \pm [ \cdot,\cdot] $ is
%positive definite on $E (\Delta) H $, if $\Delta
%\subset \R_\pm$; \item[(E4)] $E(\Delta) $ is in the double
%commutant of the resolvent $R_\T(\lambda)=(\T-\lambda)^{-1}$ and
%$\sigma (\T \upharpoonright E(\Delta)H) \subset
%\overline{\Delta}$; \item[(E5)] if $\Delta$ is bounded, then
%$E(\Delta) H \subset \dom(\T)$ and $\T\upharpoonright E
%(\Delta) H $ is a bounded operator.
%\end{description}
%\end{description}
%\end{theorem}

Let $\T$ be a J-positive J-self-adjoint operator in
$H$ with a nonempty resolvent set, $\rho (\T) \neq
\emptyset$. Then $\T$ admits a spectral function $E(\Delta)$. Namely,
\begin{description}
\item[(i)] the spectrum of $\T$ is real, $\sigma(\T) \subset \R$;
\item[(ii)] there exist a mapping $\Delta \rightarrow E(\Delta)$
from $\mathfrak{S}$ into the set of bounded linear operators in
$H$ with the following properties ($\Delta ,\Delta' \in
\mathfrak{S}$):
\begin{description}
\item[(E1)] $E(\Delta \cap \Delta' ) = E(\Delta) E(\Delta' ) $,
\quad $E(\emptyset ) = 0, \quad E(\overline{\R}) = I $, \quad
$E(\Delta) = E(\Delta)^{[*]}$; \item[(E2)] $E(\Delta \cup \Delta'
) = E(\Delta) + E(\Delta' )$ \quad if \quad $\Delta \cap \Delta' =
\emptyset$; \item[(E3)] the form $ \pm [ \cdot,\cdot] $ is
positive definite on $E (\Delta) H $, if $\Delta
\subset \R_\pm$; \item[(E4)] $E(\Delta) $ is in the double
commutant of the resolvent $R_\T(\lambda)=(\T-\lambda)^{-1}$ and
$\sigma (\T \upharpoonright E(\Delta)H) \subset
\overline{\Delta}$; \item[(E5)] if $\Delta$ is bounded, then
$E(\Delta) H \subset \dom(\T)$ and $\T\upharpoonright E
(\Delta) H $ is a bounded operator.
\end{description}
\end{description}

According to \cite[Proposition II.4.2]{L82}, a number $s \in
\{0,\infty\}$ is called \emph{a critical point} of $\T$, if for
each $\Delta \in \mathfrak{S}$ such that $s\in \Delta$, the
form $[\cdot,\cdot ]$ is indefinite on $E(\Delta)\mathfrak{H}$ (i.e., there exist 
$h_\pm \in E(\Delta)\mathfrak{H}$ such that $[h_+,h_+ ]<0$ and $[h_-,h_- ]>0$ ). The set of critical points is denoted by $c(\T)$.

If $\alpha \not \in c (\T)$, then for arbitrary $\lambda_0, \lambda_1
\in \R \setminus c(\T)$, $\lambda_0 < \alpha$, $\lambda_1>\alpha$,
the limits
\begin{equation*} %\label{KARA_e Ell}
\lim_{\lambda \uparrow \alpha} E([\lambda_0,\lambda]) , \qquad
\lim_{\lambda \downarrow \alpha} E([\lambda,\lambda_1])
\end{equation*}
exist in the strong operator topology. % (s.o.t.).
If $\alpha \in c(\T)$ and the above limits %\eqref{KARA_e Ell} 
do still
exist, then the critical point $\alpha$ is called 
\emph{regular}, otherwise it is called \emph{singular}. Here we agree
that, if $\alpha = \infty$, then $\lambda > \alpha$, $\lambda < \alpha$, $\lambda \downarrow \alpha$, and $\lambda \uparrow \alpha$  mean $\lambda > -\infty$, $\lambda < +\infty$, $\lambda \downarrow -\infty$, and $\lambda \uparrow +\infty$, respectively.

The mapping $\Delta \rightarrow E(\Delta)$ can be extended to the semiring generated by 
those intervals whose endpoints are not singular critical points of $\T$. 
For this extension Properties (E1)-(E5)  are preserved.

%\begin{proposition}[see Theorem II.5.7 in \cite{L82}] \label{p Eext}
%The mapping $\Delta \rightarrow E(\Delta)$ can be extended to the semiring generated by 
%those intervals whose endpoints are not singular critical points of $\T$. 
%For this extension Properties (E1)-(E5) of Theorem \ref{t Jsp} are preserved.
%\end{proposition}

\subsection{Proof of Theorem \ref{t EU}. \label{ss prTh1}}

Let $L$ be a positive operator in a Hilbert space $H$ and let $J$ be a signature operator in $H$. Put 
\[
B:=JL.
\]  
Then $B$ is a J-self-adjoint and J-positive operator in the Krein space 
$\K:=(H, [\cdot,\cdot ])$, where $[\cdot,\cdot ] := (J \cdot, \cdot )_H$. 
The following proposition is well known.

\begin{proposition}%[e.g. \cite{CN96}] 
\label{p cr=sim}
Let $B$ be a J-positive J-self-adjoint operator in $H$ such that $\rho(B) \neq \emptyset$.
Then the following assertions are
equivalent:
\begin{description}
\item[(i)] $B$ is similar to a self-adjoint operator. 
\item[(ii)] $0$ and $\infty$ are not singular critical points of $B$.
\item[(iii)] There exist a fundamental decomposition $H = H^B_+ [\dot +] H^B_-$ of the Krein space $\K$
such that $B = B^+ \dot + B^-$, where $B^\pm := B \uph \dom(B) \cap H^B_\pm$,
and $\sigma (B^\pm) = \overline{\sigma (B) \cap \R_\pm}$.
\end{description}
If these assertions hold and $P^B_+ $ and $P^B_-$ are corresponding 
mutually complementary projections on $H^B_+$ and $H^B_-$, respectively, then 
$P^{B}_\pm = E^B (\R_\pm)$, where $E^B (\cdot) $ is a spectral function of $B$ defined in Subsection \ref{ss Krein}. 
\end{proposition}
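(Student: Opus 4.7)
The proposition is stated as classical, so my plan is to establish the cycle $(\mathrm{i}) \Rightarrow (\mathrm{ii}) \Rightarrow (\mathrm{iii}) \Rightarrow (\mathrm{i})$ directly from the Krein-space spectral machinery collected in Subsection \ref{ss Krein}, and then to read off the last assertion from the construction.

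For $(\mathrm{ii}) \Rightarrow (\mathrm{iii})$, the hypothesis extends the map $\Delta \mapsto E^B(\Delta)$ to the semiring generated by intervals whose endpoints lie in $\{0,\infty\}$; in particular $P^B_\pm := E^B(\R_\pm)$ are bounded. J-positivity forces $\ker B = \{0\}$, and hence $E^B(\{0\}) = 0$, so that $P^B_+ + P^B_- = E^B(\overline{\R}) = I$; set $H^B_\pm := P^B_\pm H$. J-orthogonality of $H^B_+$ and $H^B_-$ comes from (E1) via
\[
[h_+, h_-] = [E^B(\R_+) h_+, E^B(\R_-) h_-] = [h_+, E^B(\R_+ \cap \R_-) h_-] = 0.
\]
By (E3) the forms $\pm[\cdot,\cdot]$ are positive on $H^B_\pm$. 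The main obstacle, and the precise place where the regularity of $0$ and $\infty$ is used, is the promotion of this semidefiniteness to \emph{uniform} positivity; only then are $(H^B_\pm, \pm[\cdot,\cdot])$ Hilbert spaces whose intrinsic norms are equivalent to $\|\cdot\|_H$. Finally, (E4) and (E5) yield that the decomposition reduces $B$ and $\sigma(B^\pm) = \overline{\sigma(B) \cap \R_\pm}$.

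For $(\mathrm{iii}) \Rightarrow (\mathrm{i})$, I would introduce the scalar product
\[
\langle h, g\rangle_{\mathrm{new}} := [P^B_+ h, P^B_+ g] - [P^B_- h, P^B_- g].
\]
The Hilbert-space property in (iii) makes $\|\cdot\|_{\mathrm{new}}$ equivalent to $\|\cdot\|_H$, so the identity map on $H$ is a bounded bijection $(H,(\cdot,\cdot)_H) \to (H,\langle\cdot,\cdot\rangle_{\mathrm{new}})$. Using the reduction $B = B^+ \dot + B^-$ together with J-self-adjointness of $B$, a direct computation shows $B$ is symmetric in $(H,\langle\cdot,\cdot\rangle_{\mathrm{new}})$; closedness of $B$ plus norm equivalence upgrades symmetry to self-adjointness. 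The identity map is then the required similarity.

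For $(\mathrm{i}) \Rightarrow (\mathrm{ii})$, write $B = S^{-1} A S$ with $A$ self-adjoint in $H$ and transport the spectral projections: $\widetilde E(\Delta) := S^{-1} E^A(\Delta) S$ are bounded for every Borel $\Delta$, and on intervals $\Delta \in \mathfrak{S}$ they agree with $E^B(\Delta)$ by uniqueness of the J-spectral function. Since $\lambda \mapsto E^A([\lambda,\lambda_1])$ has strong limits as $\lambda \downarrow 0$ and $\lambda \uparrow +\infty$ by the ordinary spectral theorem, conjugation by $S^{\pm 1}$ transfers these strong limits to $E^B$, showing that $0$ and $\infty$ are not singular for $B$. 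The final identity $P^B_\pm = E^B(\R_\pm)$ is then read off from the uniqueness of the decomposition in (iii): its defining properties (invariance under $B$ and localization of the spectrum) pin down $H^B_\pm = E^B(\R_\pm) H$, matching the projections constructed in the first step.
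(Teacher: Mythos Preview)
The paper does not prove this proposition at all; it is stated as ``well known'' and used without argument. So there is no ``paper's own proof'' to compare against, and your outline is essentially the standard route through Langer's spectral theory that the paper has in mind when it cites \cite{L82,AzJ89}.

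Your sketch is largely sound, but one step is stated incorrectly. In $(\mathrm{iii}) \Rightarrow (\mathrm{i})$ you write that ``closedness of $B$ plus norm equivalence upgrades symmetry to self-adjointness.'' Closedness and symmetry alone never yield self-adjointness. What actually finishes the argument is the spectral information already present in (iii): since $\sigma(B^\pm) \subset \overline{\R_\pm}$, the symmetric operators $B^\pm$ have nonreal points in their resolvent sets on both sides of $\R$, hence have deficiency indices $(0,0)$ and are self-adjoint in $(H^B_\pm, \pm[\cdot,\cdot])$. Then $B = B^+ \dot + B^-$ is self-adjoint in $(H, \langle\cdot,\cdot\rangle_{\mathrm{new}})$ and the identity map provides the similarity. (Compare the paper's Proposition~\ref{p BpmS-A}, which runs exactly this argument.)

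On the point you flag as the ``main obstacle'' in $(\mathrm{ii}) \Rightarrow (\mathrm{iii})$: the promotion to uniform positivity is in fact automatic once $P^B_\pm = E^B(\R_\pm)$ are bounded. Writing $H^B_+$ via its angular operator $K_+ : H_+ \to H_-$, $J$-orthogonality of $H^B_\pm$ forces $K_- = K_+^*$, and then boundedness of $P^B_+$ is equivalent to invertibility of $I - K_+^* K_+$, i.e.\ to $\|K_+\| < 1$, which is exactly uniform positivity of $H^B_+$. You should make this explicit rather than leave it as an acknowledged obstacle.
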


Assume that 
\begin{equation} \label{a sim}
B \quad \text{is similar to a self-adjoint operator in} \ H. 
\end{equation}
Then $\rho (B) \subset \R$ and assertions (i)-(iii) of Proposition \ref{p cr=sim} hold true. Note that Property (E3) implies that the form $ \pm [ \cdot,\cdot] $ is
non-negative on $H^B_\pm$.
%, and \eqref{e EqNorms} yields that $H^B_+$ ($H^B_-$) is uniformly positive (resp., %negative).
Evidently, 
\begin{gather} \label{e HBpm max}
H^B_+ \quad (H^B_-) \quad \text{is a maximal non-negative (non-positive) subspace in}
\quad  H.
\end{gather}
Moreover, it follows from \eqref{e EqNorms} that $H^B_+$ ($H^B_-$) is uniformly positive (resp., negative).
In the sequel, we consider $H^B_\pm$ as Hilbert spaces with the inner products $\pm[\cdot,\cdot]$ 
and the (intrinsic) 
norms $\ \| \cdot \|_{H^B_\pm}:= \sqrt{\pm [h , h ]}$.

\begin{proposition} \label{p BpmS-A}
Let $B$ be similar to a self-adjoint operator. Then 
the operator $ B^+ $ ($B^-$) defined in Proposition \ref{p cr=sim} is 
a positive (negative, resp.) self-adjoint operator in the Hilbert space
$H^B_+$ (resp., $H^B_-$).
\end{proposition}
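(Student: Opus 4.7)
The plan is to verify the four properties in turn: closedness, density of domain, symmetry, and positivity (or negativity) with respect to the intrinsic inner product on $H^B_\pm$, and then deduce self-adjointness from the fact that $\sigma(B^\pm)$ is real.

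First I would check that $B^+$ is closed and densely defined in $H^B_+$. Closedness follows directly from the direct-sum decomposition $B = B^+ \dot + B^-$ and the closedness of $H^B_+$ in $H$: if $h_n \in \dom(B^+)$ with $h_n \to h$ and $B^+ h_n \to g$ in $H$, then closedness of $B$ gives $h \in \dom(B)$ and $Bh = g$, while $H^B_+$ closed gives $h \in H^B_+$. By \eqref{e EqNorms} the intrinsic norm $\|\cdot\|_{H^B_+}$ is equivalent to $\|\cdot\|_H$ on $H^B_+$, so this is the relevant notion. For density, I use that $P^B_+ = E^B(\R_+)$ commutes with $B$ (Property (E4)), so $P^B_+\dom(B) \subset \dom(B) \cap H^B_+ = \dom(B^+)$; since $\dom(B)$ is dense in $H$ and $P^B_+$ is bounded and surjective onto $H^B_+$, $\dom(B^+)$ is dense in $H^B_+$.

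Next, symmetry and positivity follow from direct one-line computations. For $h,g \in \dom(B^+)$,
\[
[B^+h,g] = (JBh,g)_H = (Lh,g)_H = (h,Lg)_H = (Jh,Bg)_H = [h,B^+g],
\]
using $B=JL$, $J^2=I$, and $L=L^*$; this is exactly symmetry of $B^+$ in $(H^B_+,[\cdot,\cdot])$. Similarly, for $h \in \dom(B^+)\setminus\{0\}$,
\[
[B^+h,h] = (Lh,h)_H > 0,
\]
since $L\geq 0$ and $\ker L=\{0\}$ imply $(Lh,h)_H=\|L^{1/2}h\|_H^2>0$. So $B^+$ is a densely defined, closed, symmetric, positive operator in the Hilbert space $(H^B_+,[\cdot,\cdot])$.

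To promote symmetry to self-adjointness, I invoke Proposition \ref{p cr=sim}(iii): under \eqref{a sim} we have $\sigma(B^+)=\overline{\sigma(B)\cap \R_+}\subset \R$. A closed symmetric operator in a Hilbert space whose spectrum is contained in $\R$ is automatically self-adjoint (its deficiency subspaces at any non-real $\lambda$ must be trivial, as $\lambda$ belongs to the resolvent set). This yields that $B^+$ is a positive self-adjoint operator in $H^B_+$. The argument for $B^-$ is identical, except that the inner product on $H^B_-$ is $-[\cdot,\cdot]$, so that $(B^-h,h)_{H^B_-}=-(Lh,h)_H<0$ for $h\neq 0$, giving negativity. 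I do not anticipate a genuine obstacle here; the only step requiring a little care is justifying density of $\dom(B^\pm)$ in $H^B_\pm$, which rests on the commutation of the spectral projections $E^B(\R_\pm)$ with $B$.
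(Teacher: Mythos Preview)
Your proof is correct and follows the same route as the paper's: positivity of $B^\pm$ from $J$-positivity of $B$, symmetry from positivity, and self-adjointness from $\sigma(B^\pm)\subset\R$. The paper's version is terser---it does not explicitly verify closedness or density of $\dom(B^\pm)$---so your added details on these points are a genuine improvement rather than a deviation.
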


\begin{proof}
Positivity of $B^+$ follows immediately from J-positivity of $B$. 
%Eq. \eqref{e EqNorms} implies that $B^+$ is self-adjoint in $H^B_+$. Indeed, 
$B^+$ is symmetric since it is positive. It follows from  \eqref{a sim} that the spectrum of $B$ is real, 
$\sigma (B) \subset \R$. Thus, $\sigma (B^+) \subset \R$ and therefore $B^+$ is self-adjoint in $H^B_+$. The same arguments hold for $B^-$.
\end{proof}

Now we write equation \eqref{e PsiEq} in the form 

\begin{gather}
\frac{d \psi}{ dx} = - B \psi (x) \qquad (0<x<\tau<\infty) , 
          \label{e BEq} 
         %\\
%P_+ \psi (0) = \vphi_+  , \qquad P_- \psi (\tau) = \vphi_- \label{e BC0} ,
\end{gather}
and suppose that $\psi$ is a solution of 
\eqref{e BEq}, \eqref{e PsiBC}.

We put $\psi_\pm (x) := P^B_\pm \psi (x)$ and $E^B_t := E^B ((-\infty, t])$. 
Note that, by Proposition \ref{p cr=sim}, $E^B_t$ is 
well-defined for all $t \in \R$ and 
$E^B (\cdot ) \uph H^B_\pm$ is the spectral function 
of the self-adjoint operator $B^\pm$. 
Eq. \eqref{e BEq} 
implies that for  $x \in [0,\tau]$,
\begin{gather} 
\label{e Psi+(x)}
\psi_+ (x) = \int_{+0}^{+\infty} e^{-xt} \; d E^B_t \ \psi_+ (0) = e^{-x B^+ } \psi_+ (0) , \\ \quad 
\psi_- (x) = \int_{-\infty}^{-0} e^{(\tau-x)t} \; d E^B_t \ \psi_- (\tau) 
= e^{(\tau - x)B^-} \psi_- (\tau) .
\label{e Psi-(x)}
\end{gather}
%where $E^B_t := E^B_t ((-\infty,t]) $, $t \in \R$.
The integrals converge in the norm topologies of $H^B_\pm$ as well as in the norm topology of $H$.
Recall that, by \eqref{e EqNorms}, these topologies are equivalent.  

It follows immediately from Proposition \ref{p BpmS-A} that for all $h \in H^B_\pm$,
\begin{gather} \label{e exp<1}
 \| e^{\mp x B^\pm} h \|_{H^B_\pm} \leq \| h \|_{H^B_\pm} \qquad \text{if} \quad x \geq 0 .
\end{gather} 
So for each pair $\wt \psi_+ \in H^B_+$, $\wt \psi_- \in H^B_-$, there is a unique solution 
of \eqref{e BEq} such that $\psi_+ (0) = \wt \psi_+$, $\psi_- (\tau) = \wt \psi_-$. 
      
The boundary conditions \eqref{e PsiBC} becomes
\begin{gather} \label{e BC1}
P_+ [\psi_+ (0)+ e^{\tau B^-} \psi_- (\tau)] = \vphi_+ , \quad 
P_- [\psi_- (\tau) + e^{-\tau B^+} \psi_+ (0)] = \vphi_- .
\end{gather}

It follows from \eqref{e HBpm max} and Proposition \ref{p proj} (i) that there exist operators 
\begin{equation} \label{e Rpm}
R_\pm := (P_\pm \upharpoonright H^B_\pm)^{-1} \ : \ H_\pm \rightarrow H_\pm^B \, ; \qquad \text{moreover,} \quad
 R_\pm \quad \text{ are homeomorphisms.}
\end{equation}

Let us introduce operators $G_\pm : H^B_\pm \to H^B_\mp$ by 
\[
G_{+} h_+  := R_- P_- e^{-\tau B^+} h_+ , \quad
G_{-} h_-  := R_+ P_+ e^{\tau B^-} h_-, \quad h_\pm \in H^B_\pm .
\]

Using the operators $R_\pm$ and $G_\pm$, we write \eqref{e BC1} in the form
\begin{gather} \label{e BC2}
\psi_+ (0)+ G_- \psi_- (\tau) = R_+ \vphi_+ , \quad 
\psi_- (\tau) + G_+ \psi_+ (0) = R_- \vphi_- .
\end{gather}
Combining these equations, one gets 
\begin{gather} 
(I - G_- G_+ ) \psi_+ (0)  
= R_+ \vphi_+ - G_- R_- \vphi_- , 
\qquad  
(I - G_+ G_- ) \psi_- (\tau) = 
R_- \vphi_- - G_+ R_+ \vphi_+ .\label{e BC3}
\end{gather}

\begin{lemma} \label{l G}
\quad $ \| G^+ \|_{H^B_+ \to H^B_-} <1 $ \quad and \quad $\| G^- \|_{H^B_- \to H^B_+} <1 $. 
\end{lemma}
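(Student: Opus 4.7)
The plan is to combine a simple $J$-orthogonality identity with the uniform positivity of the canonical subspaces, and then use spectral calculus on the angle operator to upgrade a non-strict bound to a strict one.

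Fix $h_+ \in H^B_+$ and set $g := e^{-\tau B^+} h_+ \in H^B_+$ and $k := G_+ h_+ = R_- P_- g \in H^B_-$. By construction of $R_-$ we have $P_- k = P_- g$, so $g - k \in H_+$. Since the canonical subspaces $H^B_+$ and $H^B_-$ are $J$-orthogonal (Proposition \ref{p cr=sim}), $[g, k] = 0$, and therefore
\[
\|g-k\|_H^2 \;=\; [g-k,\,g-k] \;=\; [g,g] + [k,k] \;=\; \|g\|_{H^B_+}^2 - \|k\|_{H^B_-}^2.
\]
Because $e^{-\tau B^+}$ is a self-adjoint contraction on the Hilbert space $(H^B_+, \|\cdot\|_{H^B_+})$ by Proposition \ref{p BpmS-A}, this identity already yields the non-strict bound $\|G_+ h_+\|_{H^B_-} \leq \|g\|_{H^B_+} \leq \|h_+\|_{H^B_+}$.

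To sharpen to $\|G_+\| < 1$, I would introduce the angle operator $K : H_+ \to H_-$ representing $H^B_+$ as a graph, $H^B_+ = \{u + Ku : u \in H_+\}$. The uniform positivity of $H^B_+$ (which follows from \eqref{e EqNorms}) is equivalent to $\kappa := \|K\| < 1$, and the $J$-orthogonality of $H^B_+$ and $H^B_-$ forces the angle operator of $H^B_-$ to be $K^*: H_- \to H_+$. Writing $u := P_+ g$, one obtains $g = u + Ku$, $k = Ku + K^*Ku$, so $g - k = Au$ where $A := I - K^*K$ is a non-negative self-adjoint operator on $H_+$ with $\sigma(A) \subset [1 - \kappa^2, 1]$. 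The identity above rewrites as
\[
\|k\|_{H^B_-}^2 \;=\; (Au, u) - (A^2 u, u) \;=\; (A(I - A)\,u,\,u),
\]
and since $I - A = K^*K$ has spectrum in $[0, \kappa^2]$ and commutes with $A$, the operator inequality $A(I - A) \leq \kappa^2 A$ holds. This gives $\|k\|_{H^B_-}^2 \leq \kappa^2 (Au, u) = \kappa^2 \|g\|_{H^B_+}^2 \leq \kappa^2 \|h_+\|_{H^B_+}^2$, i.e.\ $\|G_+\| \leq \kappa < 1$. The bound on $G_-$ follows by an identical argument with the roles of $H^B_+$ and $H^B_-$ interchanged.

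The main obstacle is precisely the strictness of the operator-norm bound: when $0 \in \sigma(B^+)$, the semigroup $e^{-\tau B^+}$ is not a strict contraction in operator norm, so $\|G_+\| < 1$ cannot be read off from contractivity of the semigroup alone. The angle-operator bound $\|K\| < 1$, a purely geometric consequence of uniform positivity, is what supplies the missing uniform strictness; the identification of $K^*$ as the angle operator of $H^B_-$ is exactly what makes the short spectral-calculus estimate go through cleanly.
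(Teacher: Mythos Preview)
Your proof is correct, and the overall structure matches the paper's: factor $G_+ = (R_- P_-\uph H^B_+)\circ e^{-\tau B^+}$, use only the non-strict semigroup contraction \eqref{e exp<1}, and extract the strict constant entirely from the geometric factor $R_- P_- \uph H^B_+$. The difference is in how that strict constant is obtained. The paper quotes Proposition~\ref{p proj}(ii) (the Phillips--Ginzburg inequality), applied to the uniformly positive subspace $H_+$ relative to the canonical decomposition $H = H^B_+ [\dot +] H^B_-$, and evaluates it at $g_+ := h_+ - R_- P_- h_+ \in H_+$ to get $\|R_- P_- h_+\|_{H^B_-} \le \beta_+ \|h_+\|_{H^B_+}$ with $\beta_+ < 1$. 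You instead make the angle operator $K$ of $H^B_+$ explicit, identify $K^*$ as the angle operator of $H^B_-$, and carry out a short spectral-calculus estimate on $A = I - K^*K$; this effectively reproves the Phillips--Ginzburg bound in this concrete setting and yields the sharp constant $\kappa = \|K\|$. Your route is more self-contained; the paper's is shorter because the work is hidden in Proposition~\ref{p proj}(ii). Your preliminary identity $\|g-k\|_H^2 = \|g\|_{H^B_+}^2 - \|k\|_{H^B_-}^2$ (giving the non-strict bound) is pleasant but unnecessary once the angle-operator argument is in place.
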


\begin{proof}
Let us prove that for $h_\pm \in H^B_\pm$,
\begin{gather} \label{e R_+<1}
 \| R_\mp P_\mp h_\pm \|_{H^B_\mp} \leq \beta_\pm \| h_\pm \|_{H^B_\pm} \qquad  \text{with certain constants} \quad \beta_\pm <1 .
\end{gather} 
Indeed, since $R_- P_- h_+ \in H^B_-$, we have $h_+ = P^B_+ (h_+ - R_- P_- h_+)$ and therefore 
\begin{gather} \label{e h+norm=}
\| h_+ \|_{H^B_+} = \| P^B_+ (h_+ - R_- P_- h_+) \|_{H^B_+} . 
\end{gather}
Note that $H = H^B_+ [\dot +] H^B_-$ is a fundamental decomposition of 
the Krein space $\K$ and $H_+$ is a uniformly positive subspace in $\K$.
Hence  Proposition \ref{p proj} (ii) implies 
\begin{gather} \label{e Pnonneg}
\beta_+ \| P^B_+ g_+ \|_{H^B_+} \geq \| P^B_- g_+ \|_{H^B_-} 
\qquad \text{for all} \quad g_+ \in H_+
\end{gather}
with  a certain $\beta_+ < 1$.
Further,  $P_- (h_+ - R_- P_- h_+) = 0$, that is, $(h_+ - R_- P_- h_+ ) \in H_+$.
Therefore \eqref{e Pnonneg} yields 
\[
\beta_+ \| P^B_+ (h_+ - R_- P_- h_+) \|_{H^B_+} \geq \| P^B_- (h_+ - R_- P_- h_+ )\|_{H^B_-} = 
\| R_- P_- h_+ \|_{H^B_-} .
\]
From this and \eqref{e h+norm=}, we get \eqref{e R_+<1} for $h_+ \in H^B_+$.  The proof of \eqref{e R_+<1} for $h_- \in H^B_-$ is the same. 
To conclude the proof, it remains to combine \eqref{e R_+<1} and \eqref{e exp<1}.
\end{proof}

Lemma \ref{l G} implies that 
\begin{gather} \label{e I-GG}
I-G_\mp G_\pm \qquad \text{are linear homeomorphisms in} \quad H^B_\pm.
\end{gather}

Let us consider Eqs. \eqref{e BC1} as a system for the unknowns  
$\psi_+ (0) $ and $ \psi_- (\tau)$.

\begin{lemma}[\cite{C00}] \label{l sys}
System \eqref{e BC1} has a unique solution 
\begin{gather} 
\psi_+ (0) = (I - G_- G_+)^{-1} ( R_+ \vphi_+ - G_- R_- \vphi_- ), \label{e BC4i1} \\ 
\psi_- (\tau) = (I - G_+ G_-)^{-1} (R_- \vphi_- - G_+ R_+ \vphi_+ ).\label{e BC4i2}
\end{gather}
\end{lemma}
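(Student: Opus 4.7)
The plan is to treat \eqref{e BC1} as a $2\times 2$ block linear system in the unknowns $\psi_+(0) \in H^B_+$ and $\psi_-(\tau) \in H^B_-$, and reduce it to two decoupled equations by elimination, each of which is solvable thanks to \eqref{e I-GG}.

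First I would verify that the system \eqref{e BC1} and the system written in terms of $G_\pm$ and $R_\pm$, namely
\begin{gather*}
\psi_+ (0)+ G_- \psi_- (\tau) = R_+ \vphi_+ , \quad
\psi_- (\tau) + G_+ \psi_+ (0) = R_- \vphi_- ,
\end{gather*}
are equivalent formulations of the boundary conditions; this was essentially done in the paragraph preceding \eqref{e BC2}, so I would just invoke it. Next, for existence, I would solve the first equation for $\psi_+(0) = R_+ \vphi_+ - G_- \psi_-(\tau)$ and substitute into the second, obtaining $(I - G_+ G_-) \psi_-(\tau) = R_- \vphi_- - G_+ R_+ \vphi_+$. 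By \eqref{e I-GG}, the operator $I - G_+ G_-$ is a homeomorphism in $H^B_-$, so $\psi_-(\tau)$ is uniquely determined by \eqref{e BC4i2}. Substituting back and using $I - G_- G_+$ (invertible by \eqref{e I-GG}) gives \eqref{e BC4i1}; alternatively one repeats the argument with the roles of $+$ and $-$ exchanged.

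For uniqueness, suppose $(\psi_+(0), \psi_-(\tau))$ satisfies the homogeneous system (i.e. $\vphi_\pm = 0$). Then elimination gives $(I - G_+ G_-)\psi_-(\tau) = 0$, so $\psi_-(\tau) = 0$ by \eqref{e I-GG}, and then $\psi_+(0) = -G_-\psi_-(\tau) = 0$. Hence the solution is unique. One should also check that the right-hand sides $R_\pm \vphi_\pm - G_\mp R_\mp \vphi_\mp$ lie in the correct spaces $H^B_\pm$: $R_\pm \vphi_\pm \in H^B_\pm$ by \eqref{e Rpm}, and $G_\mp R_\mp \vphi_\mp \in H^B_\pm$ by the definition of $G_\mp$, so the formulas make sense.

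There is no real obstacle here — the argument is elementary linear algebra once Lemma \ref{l G} and \eqref{e I-GG} are available. The only care needed is bookkeeping of which space each operator maps between (since $G_\pm$ and $R_\pm$ go between $H_\pm$ and $H^B_\pm$), and ensuring that the compositions $G_+ G_-$ and $G_- G_+$ act on the correct Hilbert space; this is automatic from the definitions but worth stating explicitly before applying \eqref{e I-GG}.
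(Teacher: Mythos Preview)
Your proposal is correct. Both you and the paper reduce \eqref{e BC1} to the equivalent system \eqref{e BC2} and then eliminate one unknown using the invertibility result \eqref{e I-GG}. The difference is organizational: the paper first records both eliminated equations together as system \eqref{e BC3}, observes that \eqref{e BC3} is equivalent to \eqref{e BC4i1}--\eqref{e BC4i2}, and then must separately verify the reverse implication \eqref{e BC3} $\Rightarrow$ \eqref{e BC2} via the intertwining identity $G_\pm (I - G_\mp G_\pm) = (I - G_\pm G_\mp) G_\pm$. Your direct Gaussian-elimination argument (solve one equation for $\psi_+(0)$, substitute, invert, then back-substitute or repeat symmetrically) never loses the equivalence, so the reverse-implication step becomes unnecessary. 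This is a slightly more streamlined route to the same conclusion; the paper's version has the minor advantage of displaying \eqref{e BC3} explicitly, which is referenced elsewhere in the argument.
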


\begin{remark}
Lemma \ref{l sys} was obtained in other form in \cite{C00}
(see Lemma 2.2 and the end of the proof of Theorem 3.4 there).
Earlier, it was proved under the additional condition 
$\sigma(B)=\sigma_{\disc} (B)$ in \cite{BP83}, see also \cite{Pyat84,GvdMP87}.
We give another proof, which is based on Lemma \ref{l G} and is an improvement of treatments from \cite{Pyat84}.
\end{remark}

\begin{proof}
First note that system \eqref{e BC1} is equivalent to system \eqref{e BC2}.   
Further, \eqref{e BC2} implies \eqref{e BC3}. On the other hand, 
\eqref{e BC3} is equivalent to \eqref{e BC4i1}-\eqref{e BC4i2} due to \eqref{e I-GG}.  
Thus we should only prove that \eqref{e BC3} implies \eqref{e BC2}.

Let us write Eqs. \eqref{e BC3} as
\begin{gather*}
R_+ \vphi_+ = 
(I - G_- G_+ ) \psi_+ (0) + G_- R_- \vphi_-   ,  \qquad
R_- \vphi_- = (I - G_+ G_- ) \psi_- (\tau) + G_+ R_+ \vphi_+ . %\label{e BC52} 
\end{gather*}
Combining these two equalities, we get
\begin{gather*}
(I - G_- G_+ ) R_+ \vphi_+ = (I - G_- G_+ ) \psi_+ (0) +  G_- (I - G_+ G_- ) \psi_- (\tau) ,  \\ 
(I - G_+ G_- ) R_- \vphi_- = (I - G_+ G_- ) \psi_- (\tau) + G_+ (I - G_- G_+)  \psi_+ (0).  
\end{gather*}
Note that $G_\pm (I - G_\mp G_\pm ) = (I - G_\pm G_\mp )G_\pm $
and therefore, using \eqref{e I-GG}, one obtains \eqref{e BC2}.
\end{proof}

This lemma shows that the function 
\begin{gather} \label{e sol}
\psi (x) = e^{-t B^+ } (I - G_- G_+)^{-1} ( R_+ \vphi_+ - G_- R_- \vphi_- ) 
+ e^{(\tau - t)B_-} (I - G_+ G_-)^{-1} (R_- \vphi_- - G_+ R_+ \vphi_+ ) 
\end{gather}
is a unique solution of problem \eqref{e PsiEq}-\eqref{e PsiBC}.

%The functions $\psi_\pm$ are continuous on $[0,\tau]$ since $U_\pm () := e^{\mp tB^\pm } $ and 
%are $C_0$-semigroups, i.e., the operator-functions $e^{\mp tB^\pm } $ are strongly continuous for $t>0$ and 
%$\lim_{t \to +0} \| h - e^{\mp tB^\pm } h \|_{H^B_\pm}  = 0$ for all $h \in H^B_\pm$.
%Furthermore, it follows from $B^\pm e^{\mp tB^\pm } = e^{\mp tB^\pm } B^\pm$ 
%that $d \psi_\pm /dx$ are strongly continuous on $(0,\tau)$.
%Eq. \eqref{e EqNorms} shows that the same holds true for $\psi (=\psi_+ + \psi_-)$.
%Using \eqref{e Psi+(x)}-\eqref{e Psi-(x)}, 
One can check that the functions $\psi_\pm$ are continuous on $[0,\tau]$, the strong derivatives $d \psi_\pm /dx$ exist and are (strongly) continuous on $(0,\tau)$ (see Subsection~\ref{ss  cor n-h}).
This completes the proof of Theorem~\ref{t EU}.

\subsection{Half-space problems \label{ss h-s}} 

Under the same assumptions, let us consider the equation  
\begin{gather}
\frac{d \psi}{ dx} = - JL \psi (x) \qquad (0<x<\infty) ,
          \label{e EqInf} 
\end{gather}
on the infinite interval $(0,+\infty)$.          
The boundary conditions 
\begin{gather}
P_+ \psi (0) = \vphi_+  , \qquad \label{e BCat0} \\ 
\| \psi (x) \|_{H} = O(1) \qquad (x \rightarrow +\infty)   
          \label{e BCatInf}
\end{gather}
corresponds to this feature of the problem (see e.g. \cite{GvdMP87}). 
As above, $\vphi_+ \in H_+$ is a given vector.

\begin{theorem} \label{t EU Jpos} 
Assume that $L =L^* > 0$ and that the operator $B:=JL$ is similar to a self-adjoint operator in the Hilbert space $H$. 
Then for each $\vphi_+  \in H_+ $ there is a unique solution $\psi $ of 
\eqref{e EqInf}-\eqref{e BCat0}-\eqref{e BCatInf}.
This solution is given by 
\begin{gather}
\psi (x) = \int_{+0}^{+\infty} e^{-xt} \; d E^B_t R_+ \vphi_+ \quad (= e^{-x B^+ } R_+ \vphi_+),
\label{e PsiInf}
\end{gather} 
where $E_t^B$, $R_+$, and $B^+$ are the operators defined in Subsection \ref{ss prTh1}.
\end{theorem}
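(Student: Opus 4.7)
The plan is to reuse the apparatus of Subsection~\ref{ss prTh1}. Since $B = JL$ is similar to a self-adjoint operator, Proposition~\ref{p cr=sim} yields the fundamental decomposition $H = H^B_+ [\dot +] H^B_-$, and Proposition~\ref{p BpmS-A} shows that $B^\pm$ are self-adjoint in the intrinsic Hilbert space $H^B_\pm$ with $\pm B^\pm \geq 0$. Given a strong solution $\psi$ of \eqref{e EqInf}--\eqref{e BCatInf}, I split $\psi = \psi_+ + \psi_-$ with $\psi_\pm(x) := P^B_\pm \psi(x)$. The projections $P^B_\pm$ are bounded and leave $\dom(B)$ invariant with $B \uph H^B_\pm = B^\pm$, so the two components are $C^1$ and decouple into $d\psi_\pm/dx = -B^\pm \psi_\pm$ in $H^B_\pm$.

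For $\psi_+$ the operator $B^+$ is non-negative self-adjoint and the standard argument of Subsection~\ref{ss prTh1} gives $\psi_+(x) = e^{-xB^+}\psi_+(0)$. The component $\psi_-$ is more delicate: since $-B^- \geq 0$ may be unbounded, the equation $d\psi_-/dx = -B^-\psi_-$ is ill-posed in forward time and I cannot apply a semigroup formula directly. Instead I localize in spectrum. For any bounded Borel set $\Delta \subset (-\infty,0]$ let $E(\Delta) := E^{B^-}(\Delta)$; this is an orthogonal projection in $H^B_-$ whose range is $B^-$-invariant and on which $B^-$ is bounded. Applying $E(\Delta)$ to the equation gives $E(\Delta)\psi_-(x) = e^{-xB^-}E(\Delta)\psi_-(0)$, so by the spectral theorem and \eqref{e EqNorms},
\[
\int_\Delta e^{-2xt}\, d\|E^{B^-}_t \psi_-(0)\|^2_{H^B_-} = \|E(\Delta)\psi_-(x)\|^2_{H^B_-} \leq C, \qquad x\geq 0.
\]
Specializing to $\Delta \subset (-\infty,-\varepsilon]$ and letting $x \to \infty$ forces $E^{B^-}((-\infty,-\varepsilon])\psi_-(0) = 0$ for every $\varepsilon > 0$, hence $E^{B^-}((-\infty,0))\psi_-(0) = 0$. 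Combined with $\ker B^- \subset \ker B = \ker L = \{0\}$ (so $E^{B^-}(\{0\}) = 0$), this yields $\psi_-(0) = 0$, and therefore $\psi_- \equiv 0$ and $\psi(x) \in H^B_+$ for all $x$.

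The boundary condition at zero now reads $P_+\psi_+(0) = \varphi_+$, which by \eqref{e Rpm} gives $\psi_+(0) = R_+\varphi_+$ and establishes uniqueness together with formula~\eqref{e PsiInf}. For existence, the function $\psi(x) := e^{-xB^+}R_+\varphi_+$ lies in $H^B_+$, is $C^1$ on $(0,\infty)$ with $d\psi/dx = -B^+\psi = -B\psi$, satisfies $P_+\psi(0) = P_+R_+\varphi_+ = \varphi_+$, and is bounded in $H$ by \eqref{e exp<1} together with \eqref{e EqNorms}. The main obstacle is precisely the ill-posedness step for $\psi_-$: the spectral-localization argument is what converts the a priori bound \eqref{e BCatInf} at infinity into $\psi_-(0) = 0$, and it relies crucially on the injectivity of $L$ to exclude any contribution from $\{0\}$, even when $0$ belongs to the essential spectrum of $B^-$.
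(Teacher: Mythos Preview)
Your proof is correct and follows the same overall architecture as the paper: decompose $\psi = P^B_+\psi + P^B_-\psi$, show $\psi_-\equiv 0$ using the bound at infinity, then read off $\psi_+(0)=R_+\vphi_+$ from \eqref{e Rpm} and verify existence via \eqref{e exp<1} and \eqref{e EqNorms}.

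The one place where you diverge from the paper is the treatment of $\psi_-$. The paper does not localize in spectrum; instead it invokes the backward representation \eqref{e Psi-(x)} on each finite interval $[0,X]$ to conclude that $\psi_-(0)=e^{XB^-}\psi_-(X)\in\ran(e^{XB^-})=\dom(e^{-XB^-})$ for every $X>0$, so that $\psi_-(X)=e^{-XB^-}\psi_-(0)$ is legitimate, and then uses $B^-<0$ (strict negativity, i.e.\ $E^{B^-}(\{0\})=0$) to get $\|\psi_-(X)\|_{H^B_-}\to\infty$ unless $\psi_-(0)=0$. Your spectral-cutoff argument achieves the same conclusion without ever writing the unbounded operator $e^{-xB^-}$: by projecting onto $E^{B^-}(\Delta)H^B_-$ with $\Delta$ bounded you reduce to a bounded-generator ODE, and the monotone-convergence step plus $\ker L=0$ replaces the blow-up argument. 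Both routes are short and rest on the same two ingredients (the spectral theorem for $B^-$ and the injectivity of $L$); yours is a bit more explicit about domain issues, the paper's is slightly quicker once \eqref{e Psi-(x)} is available.
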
 

The proof is simpler than the proof of Theorem \ref{t EU}. 
It is similar to the treatments from \cite[Section 4]{GGvdM88},
where equation \eqref{e AKE} with bounded $T$ and uniformly positive $A$ was considered. 
We give a sketch here.

\begin{proof}
%\emph{Uniqueness.}
Let $\psi $ be a solution of problem \eqref{e EqInf}-\eqref{e BCatInf} and   
$\psi_\pm (\cdot) := P^B_\pm \psi (\cdot)$.
%$B^-$ is a negative self-adjoint operator in the Hilbert space $H^B_-$ due to %Propositions \ref{p cr=sim} and \ref{p BpmS-A}. 
It follows from \eqref{e Psi-(x)} that for all $x \in (0,+\infty)$,
\begin{gather*} %\label{e psi-}
\psi_- (0) \in \ran (e^{xB^-}) = \dom (e^{-xB^-}) \qquad \text{and} \quad \psi_- (x) = e^{-xB^-} \psi_- (0) .
\end{gather*}
Since $B^- <0$, we see that 
\[
 \lim_{x \to +\infty} \| \psi_- (x) \|_{H^B_-} = \infty \qquad \text{whenever} \quad 
\psi_- (0) \neq 0. 
\]
Taking into account \eqref{e EqNorms} and \eqref{e BCatInf}, we get 
$\psi_- (0) = \psi_- (x) = 0$ for all $x \in (0,\infty)$. 
Hence, $\psi (x) = \psi_+ (x)$, $x \geq 0$, and therefore \eqref{e BCat0} yields  $P_+ \psi_+ (0) = \vphi_+$. Combining this with \eqref{e BCat0} and \eqref{e Rpm}, 
%Using  , one gets $\psi_+ (0) = R_+ \vphi_+ $. Thus, 
one can see that $\psi (x) = \psi_+ (x) = e^{-xB^+} R_+ \vphi_+ $ is an only function that satisfies \eqref{e EqInf} and \eqref{e BCat0}. Finally, note that \eqref{e BCatInf}
follows from \eqref{e exp<1} and \eqref{e EqNorms}.
\end{proof}

\begin{remark}
Clearly, Theorem \ref{t EU Jpos} is valid with the condition 
$\| \psi (x) \|_{H} = o(1)$ as $x \rightarrow +\infty$ instead of 
\eqref{e BCatInf}.
\end{remark}

\section{Correctness and nonhomogeneous problems \label{ss cor n-h}}

Let the assumptions of Subsection \ref{ss prTh1} be fulfilled.
%$L=L^*>0$ and let assumption \eqref{a sim} be fulfilled.

Since $B^+$ is a positive self-adjoint operator in $H^B_+$,
we see that $U_+ (z) := e^{-zB^+}$ is a bounded holomorphic semigroup in the sector 
$|\arg z|< \pi/2$ (see e.g. \cite[Subsection IX.1.6]{Kato66}). 
%That is, $U_+ (z) := e^{-zB^+}$ is holomorphic and uniformly %bounded for $|\arg z|< \pi/2$, 
%and the strong limit at $z=0$ (within this sector) is equal to the %identity operator in $H^B_+$. 
The same is true for the function $U_- (z) := e^{zB^-}$.
In particular, this implies that for any $\wt \psi_\pm \in H^B_\pm$ problems 
\begin{gather} \label{e pr+}
d\psi_+ (x) /dx = -B^+ \psi_+ (x), \quad 
x >0, \qquad \psi_+ (0) = \wt \psi_+, 
\end{gather}
\begin{gather} \label{e pr-}
d\psi_- (x) /dx = -B^- \psi_- (x), \quad 
x < \tau , \qquad \psi_- (\tau) = \wt \psi_-, 
\end{gather} 
 have unique solutions $\psi_+ (x) = U_+ (x) \wt \psi_+$ and $\psi_- (x) = U_- (\tau-x) \wt \psi_-$, respectively. 
Besides, these solutions are infinitely differentiable on $(0,\tau)$ and  problems \eqref{e pr+}, \eqref{e pr-} are uniformly correct (see e.g. \cite[Subsection I.2]{KSG71}).
The letter means that 
the mappings $ \wt \psi_\pm \to \psi_\pm (\cdot)$ from $H^B_\pm $ to $C([0,\tau];H^B_\pm)$ 
are continuous. 

Lemma \ref{l sys} and \eqref{e Psi+(x)}-\eqref{e Psi-(x)} show that 
the solution $\psi $ of problem \eqref{e PsiEq}-\eqref{e PsiBC} possesses the same properties:
\begin{description}
\item[(i)] $\psi $ are infinitely differentiable on $(0,\tau)$,
\item[(ii)] the mapping $ \{\vphi_+,\vphi_- \} \to \psi (\cdot)$ from $H_+ \oplus H_- $ to $C([0,\tau];H)$ 
is continuous. 
\end{description}
One can obtain similar statements for the solution of problem 
\eqref{e EqInf}-\eqref{e BCatInf}.

Now consider the nonhomogeneous equation 
\begin{gather}
\frac{d \psi}{ dx} = - JL \psi (x) + f(x) \qquad (0<x< \tau \leq \infty) ,
          \label{e NonHomEq} 
\end{gather}
where $f$ is an $H$-valued function. 

We assume that $f$ is H\"older continuous on 
%$[0,+\infty)$, 
%$(0,\tau)$, i.e.,
%there are numbers $K>0$, $k \in (0,1]$ such that 
%\begin{gather} 
%\| f(x) - f(y) \|_{H} \leq K |x-y |^{k} \qquad \text{for} %\quad 0 < x,y < \tau \label{e asHol}.
%\end{gather}
all finite intervals $[0, x_1 ]$, i.e., 
\begin{gather} 
\text{for each} \quad x_1 \in \R_+ \quad \text{there are numbers} \quad K=K(x_1)>0, \quad k=k (x_1) \in (0,1] \quad 
\text{such that} \notag \\ 
\| f(x) - f(y) \|_{H} \leq K |x-y |^{k} \qquad \text{for} \quad 0 \leq x,y \leq x_1 \label{e asHol}.
\end{gather}
Evidently, the functions $f_\pm (x) := P^B_\pm f (x)$ possess the same property.

Let us start from the case $ \tau < \infty$ and boundary conditions \eqref{e PsiBC}.

The fact that $U_\pm (z)$ are bounded holomorphic semigroups enables us to apply \cite[Theorem IX.1.27]{Kato66}.
This theorem yields that 
the functions 
\begin{gather} \label{e psi1pm}
\psi_1^+ (x) := \int_0^x U_+ (x-y) f_+ (y) dy  \quad \text{and}  \quad
\psi_1^- (x) := -\int_x^\tau U_- (y-x) f_- (y) dy 
\end{gather}
are continuous for $x \in [0,\tau]$, continuously differentiable for $x \in (0,\tau)$ and 
$d \psi_1^\pm / dx = - B^\pm \psi_1^\pm + f_\pm$. 
By Theorem \ref{t EU}, there exist a unique solution $\psi_0 $ of homogeneous equation
\eqref{e BEq} satisfying the boundary conditions 
\begin{gather*}
P_+ \psi_0 (0) = \vphi_+ - P_+ \psi_1^- (0), \qquad 
P_- \psi_0 (\tau) = \vphi_- - P_- \psi_1^+ (\tau) .%\label{} ,
\end{gather*}
The representation of $\psi_0$ may be obtained from \eqref{e sol}.
Thus we prove the following statement.

\begin{proposition} \label{p NHfin}
Let assumptions \eqref{a sim} and \eqref{e asHol} be fulfilled.
Then problem \eqref{e NonHomEq}, \eqref{e PsiBC} has a unique  solution $\psi$ given by 
$\psi = \psi_1^+ + \psi_1^- + \psi_0 $.
\end{proposition}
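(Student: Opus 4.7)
The plan is to reduce the nonhomogeneous problem to a homogeneous one via a Duhamel-type particular solution decomposed along the spectral decomposition $H = H_+^B [\dot+] H_-^B$ given by Proposition~\ref{p cr=sim}. Since the spectral projections $P_\pm^B = E^B(\R_\pm)$ are bounded on $H$ (they form the canonical decomposition associated with $B$), the components $f_\pm := P^B_\pm f$ inherit the H\"older estimate \eqref{e asHol} from $f$; moreover, by the norm equivalence \eqref{e EqNorms}, $f_\pm$ is H\"older-continuous also as a function with values in the Hilbert space $H^B_\pm$.

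The first step is to show that the particular solutions $\psi_1^\pm$ in \eqref{e psi1pm} are strong solutions on the respective spectral components. By Proposition~\ref{p BpmS-A}, $B^+$ is positive self-adjoint on $H^B_+$ and $-B^-$ is positive self-adjoint on $H^B_-$, so $U_+(z) = e^{-zB^+}$ and $U_-(z) = e^{zB^-}$ are bounded holomorphic semigroups in the right half-plane $|\arg z|<\pi/2$. Applying Kato's theorem \cite[Thm.~IX.1.27]{Kato66} to the H\"older-continuous forcings $f_\pm$, I obtain that $\psi_1^\pm \in C([0,\tau]; H^B_\pm) \cap C^1((0,\tau); H^B_\pm)$ and satisfy
\begin{gather*}
\frac{d\psi_1^+}{dx} = -B^+ \psi_1^+ + f_+, \qquad \psi_1^+(0)=0, \\
\frac{d\psi_1^-}{dx} = -B^- \psi_1^- + f_-, \qquad \psi_1^-(\tau)=0.
\end{gather*}
By \eqref{e EqNorms}, these regularity and differentiability properties transfer to the ambient norm on $H$, and because $\psi_1^\pm(x) \in H^B_\pm$ for every $x$, the action of $B$ on them coincides with that of $B^\pm$.

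Next I define the adjusted boundary data
\[
\widetilde\vphi_+ := \vphi_+ - P_+ \psi_1^-(0) \in H_+, \qquad
\widetilde\vphi_- := \vphi_- - P_- \psi_1^+(\tau) \in H_-,
\]
and invoke Theorem~\ref{t EU} to obtain a unique strong solution $\psi_0$ of the homogeneous equation \eqref{e BEq} satisfying $P_+\psi_0(0)=\widetilde\vphi_+$ and $P_-\psi_0(\tau)=\widetilde\vphi_-$. Setting $\psi := \psi_0 + \psi_1^+ + \psi_1^-$, a direct check gives $d\psi/dx = -B\psi_0 - B^+\psi_1^+ - B^-\psi_1^- + f_+ + f_- = -JL\psi + f$, while using $\psi_1^+(0) = 0$ and $\psi_1^-(\tau) = 0$ together with the choice of $\widetilde\vphi_\pm$ yields $P_+\psi(0)=\vphi_+$ and $P_-\psi(\tau)=\vphi_-$. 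Uniqueness is immediate: any two strong solutions of \eqref{e NonHomEq}, \eqref{e PsiBC} have a difference that is a strong solution of the homogeneous equation \eqref{e BEq} with $\vphi_\pm = 0$, hence is identically zero by Theorem~\ref{t EU}.

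The only nontrivial analytic input is the first step, where the H\"older hypothesis \eqref{e asHol} is precisely what is needed so that the convolution integrals $\psi_1^\pm$ are genuinely differentiable and produce $-B^\pm\psi_1^\pm + f_\pm$ in the limit; mere continuity of $f$ would not suffice. Once this classical analytic-semigroup fact is in place, the remainder of the argument is a routine linear superposition built on Theorem~\ref{t EU}.
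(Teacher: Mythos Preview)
Your proof is correct and follows essentially the same route as the paper: invoke \cite[Theorem~IX.1.27]{Kato66} for the analytic semigroups $U_\pm$ to show the Duhamel integrals $\psi_1^\pm$ are strong solutions of the component equations, then apply Theorem~\ref{t EU} to the homogeneous problem with the adjusted boundary data $\vphi_+ - P_+\psi_1^-(0)$ and $\vphi_- - P_-\psi_1^+(\tau)$. You spell out a few details the paper leaves implicit (the inheritance of H\"older continuity by $f_\pm$, the verification of the boundary conditions using $\psi_1^+(0)=0$ and $\psi_1^-(\tau)=0$, and the uniqueness argument), but the structure is identical.
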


In the case $\tau = \infty$, we assume additionally that 
\begin{gather} \label{e fL1}
\int_0^{+\infty} \| f(x) \|_H dx < \infty .  
\end{gather}

Let us define $\psi_1^\pm (\cdot)$ by \eqref{e psi1pm} (with 
$\tau = +\infty$). Assumption \eqref{e fL1} and inequality \eqref{e exp<1} yield that $\psi_1^- (\cdot)$ is well-defined on $[0,\infty)$. 

\begin{proposition} \label{p NHinf}
Let $\tau = \infty$, let assumptions \eqref{a sim}, \eqref{e asHol}, and \eqref{e fL1} be fulfilled.
Then problem \eqref{e NonHomEq}, \eqref{e BCat0}, \eqref{e BCatInf} has a unique  solution $\psi$ given by 
$\psi = \psi_1^+ + \psi_1^- + \psi_0 $,
where 
\begin{gather*} %\label{e psi0}
\psi_0 (x) = e^{-x B^+ } R_+ (\vphi_+ - P_+ \psi_1^- (0)) .
\end{gather*}
\end{proposition}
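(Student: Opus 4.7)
The plan is to adapt the argument of Proposition \ref{p NHfin} to the half-line by replacing Theorem \ref{t EU} with Theorem \ref{t EU Jpos} for the homogeneous corrector, and to use the new integrability assumption \eqref{e fL1} exactly where needed: to make the improper integral defining $\psi_1^-$ and its $x\to\infty$ behavior work out.

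First I would verify that $\psi_1^\pm$ (defined by \eqref{e psi1pm} with $\tau=+\infty$ in $\psi_1^-$) are well-defined on $[0,\infty)$ with the correct regularity and ODE. For $\psi_1^+$ the integral is over the bounded interval $[0,x]$, the function $f_+:=P^B_+ f$ inherits H\"older continuity from \eqref{e asHol} (with constants depending on $x_1$), and \cite[Theorem IX.1.27]{Kato66} applies verbatim as in the finite case: $\psi_1^+\in C([0,\infty);H^B_+)\cap C^1((0,\infty);H^B_+)$ and $d\psi_1^+/dx=-B^+\psi_1^++f_+$. For $\psi_1^-$, the contractivity $\|U_-(s)\|_{H^B_-\to H^B_-}\leq 1$ combined with \eqref{e EqNorms} and \eqref{e fL1} yields absolute convergence of $\int_x^\infty U_-(y-x)f_-(y)\,dy$ in $H^B_-$, uniformly in $x\geq 0$. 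After the substitution $s=y-x$ one writes $\psi_1^-(x)=-\int_0^\infty U_-(s)f_-(x+s)\,ds$, and a truncation argument (applying Kato's theorem on each $[0,x_1]$ to the truncated tail $\int_0^N U_-(s)f_-(x+s)\,ds$ and passing to the limit using the $L^1$-tail bound) gives the analogous regularity and $d\psi_1^-/dx=-B^-\psi_1^-+f_-$.

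Second I would check the growth condition. Contractivity of $U_\pm$ on $H^B_\pm$ together with \eqref{e EqNorms} and \eqref{e fL1} gives a uniform bound
\[
\|\psi_1^+(x)+\psi_1^-(x)\|_H \leq C\int_0^\infty \|f(y)\|_H\,dy <\infty,\qquad x\geq 0,
\]
for some constant $C$ depending only on the norm-equivalence constants. Since $\psi_1^+(0)=0$ and $\psi_1^-(0)\in H^B_-$ is a well-defined vector, the datum $\vphi_+-P_+\psi_1^-(0)$ belongs to $H_+$, and Theorem \ref{t EU Jpos} produces the announced $\psi_0(x)=e^{-xB^+}R_+(\vphi_+-P_+\psi_1^-(0))$ satisfying the homogeneous problem. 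Set $\psi:=\psi_1^++\psi_1^-+\psi_0$. Then $\psi$ solves \eqref{e NonHomEq} by linearity; the boundary condition \eqref{e BCat0} holds because $P_+R_+=I_{H_+}$ gives $P_+\psi(0)=P_+\psi_1^-(0)+(\vphi_+-P_+\psi_1^-(0))=\vphi_+$; and \eqref{e BCatInf} follows from the above $O(1)$ bound on $\psi_1^\pm$ combined with $\|\psi_0(x)\|_H=O(1)$ from Theorem \ref{t EU Jpos}.

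Uniqueness is immediate: the difference of two solutions solves the homogeneous problem \eqref{e EqInf}--\eqref{e BCat0}--\eqref{e BCatInf} with $\vphi_+=0$, and Theorem \ref{t EU Jpos} forces it to vanish. The main obstacle I expect is the technical bookkeeping in step one — justifying differentiation of $\psi_1^-$ under an improper integral and its continuity down to $x=0^+$ — but this is handled by a routine truncation combining the local H\"older estimate needed for \cite[Theorem IX.1.27]{Kato66} with the global $L^1$ tail control supplied by \eqref{e fL1}.
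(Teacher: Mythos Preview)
Your proposal is correct and follows the same overall strategy as the paper: construct the particular pieces $\psi_1^\pm$, verify they solve the split nonhomogeneous equations and remain bounded, then invoke Theorem \ref{t EU Jpos} for the homogeneous corrector $\psi_0$ and for uniqueness.

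The one place where the paper differs from you is the technical step you flag as ``the main obstacle'', namely the differentiability of $\psi_1^-$ on $(0,\infty)$. Rather than a truncation/limit argument under the improper integral, the paper fixes an arbitrary $X>0$ and, for $0<x<X$, splits
\[
\psi_1^-(x) = -\int_x^X U_-(y-x) f_-(y)\,dy \; - \; \int_X^{+\infty} U_-(y-x) f_-(y)\,dy =: v_1(x)+v_2(x).
\]
The first piece $v_1$ is exactly the finite-$\tau$ formula, so \cite[Theorem IX.1.27]{Kato66} applies directly; the tail $v_2(x) = -U_-(X-x)\int_X^\infty U_-(y-X) f_-(y)\,dy$ is a semigroup orbit of a fixed vector and hence trivially satisfies $dv_2/dx = -B^- v_2$. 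Since $X$ is arbitrary this covers all $x>0$. This avoids passing derivatives through a limit and is a bit cleaner than your route, though both arguments are valid.
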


\begin{proof}
For $0<x<X$, we write $\psi_1^- (x)$ in the form $v_1 (x) + v_2 (x)$, where
\[
v_1 (x) =  -\int_x^X U_- (y-x) f_- (y) dy, \quad
v_2 (x) =  -\int_X^{+\infty} U_- (y-x) f_- (y) dy.
\]
Since $v_2 (x) =  - U_- (X-x) \int_{X}^{+\infty} U_- (y-X)  f_- (y) dy$, we see that $dv_2 (x)/dx = -B^- v_2 (x)$
for $0<x<X$. On the other hand, \cite[Theorem IX.1.27]{Kato66} yields that $v_1$ is a solution of nonhomogeneous equation 
$d\psi^- (x)/dx = - B^- \psi^- (x) + f_- (x)$ for 
$0<x<X$ and, therefore, so is $\psi_1^- $. The latter holds for  all $x>0$ since $X$ is arbitrary.
Thus $\psi_1^+ + \psi_1^- $ is a solution of nonhomogeneous equation \eqref{e NonHomEq}. 

It follows from \eqref{e fL1} and \eqref{e exp<1} that 
$\| \psi_1^+ (x) \|= O(1)$ and $ \| \psi_1^- (x) \| = o(1)$ as $x \to \infty$. 
Using Theorem \ref{t EU Jpos}, one can conclude the proof.
\end{proof}

If $L\geq\la_0>0$, then condition \eqref{e fL1} can be relaxed. 
One can change it to $ \| f(x) \| = O(1)$ as $x \to \infty$ or to 
$\int_0^\infty e^{\la_0 x} \| f(x) \| dx < \infty$ (cf. \cite[Section II]{GvdMP87}).

\section{Abstract kinetic equations \label{s AKE}}

Let $\Hte$ be a complex Hilbert space with scalar product $ \btel \cdot , \cdot \bter$ 
and norm $\| \cdot \|_\Hte$. 

Assume that $T$ is a (bounded or unbounded) self-adjoint operator in $\Hte$ and that
T is injective (i.e., $\ker T =0$).

Let $Q_+ := E^{T} (\R_+)$  $(Q_- := E^{T} (\R_-))$ 
be the orthogonal projection of $\Hte$ 
onto the maximal T-invariant subspace on which $T$ is positive (negative).
Then 
$|T|:=(Q_+ - Q_-) T$ is a positive self-adjoint operator.
Note that
\begin{gather} \label{e QT=TQ}
 Q_\pm T = T Q_\pm \qquad \text{and}  \qquad Q_\pm T^{-1} = T^{-1} Q_\pm .
\end{gather}

Following \cite{B79}, let us introduce the scalar product 
$\btel  h , g \bter_T = \btel |T| h , g \bter$ for $h,g \in \dom (T)$ 
with corresponding norm $\| \cdot \|_T $ and denote by $\Hte_T$ 
the completion of $\dom (T)$ with respect to (w.r.t.) this norm. Clearly, 
$\Hte \cap \Hte_T = \dom (|T|^{1/2})$ and $\| h \|_T = \| \, |T|^{1/2} h \, \|_\Hte$
for $h \in \dom (|T|^{1/2})$. 

Similarly, we may introduce another scalar product 
$\btel  h , g \bter_T^\prime = \btel |T|^{-1} h , g \bter_T$ 
on $\dom (T^{-1}) $ with associated norm $\| \cdot \|_T^\prime $ and consider 
the completion $\Hte_T^\prime$ of $\dom (T^{-1})$ w.r.t. the norm $\| \cdot \|_T^\prime $.
As before, $\Hte \cap \Hte_T^\prime = \dom (|T|^{-1/2})$. 

It is easy to see that for each $g \in \ran (|T|) = \dom (T^{-1})$, the linear functional 
$\btel \cdot, g \bter$ is continuous on $\dom (T)$ w.r.t. the norm $\| \cdot \|_T$.
Besides, its norm is equal to 
\[
\sup_{\substack{h \in \dom (T) \\ h \neq 0}} \frac {|\btel h,g \bter|}{ \| \, |T|^{1/2} h \, \|_\Hte}
= \sup_{\substack{h' \in \dom (|T|^{1/2}) \\ h' \neq 0}} = 
\frac {|\btel |T|^{-1/2} h', g \bter|}{ \| \, h' \, \|_\Hte} 
= \| \, |T|^{-1/2} g \, \|_{\Hte} = \| g \|_T^\prime .
\]
So one can use the $\Hte$-scalar product as a pairing to identify 
$\Hte_T^\prime$ with the dual $\Hte_T^*$ of $\Hte_T$. 
The operator $|T|$ ($|T|^{-1}$) has a natural isometric extension from $\Hte_T$ onto $\Hte_T^\prime$ 
(from $\Hte_T^\prime$ onto $\Hte_T$). We use the same notation for the extensions.

By \eqref{e QT=TQ},
we may extend 
the orthogonal projection $Q_\pm$ onto $\Hte_T$ and $\Hte_T^\prime$. 
Put $T h := (Q_+ - Q_-) |T| h$ for $h \in \Hte_T$ and  
$T^{- 1} h := (Q_+ - Q_-) |T|^{-1} h$ for $h \in \Hte_T^\prime$. 

%The operator $T \uph $
%It is easy to see that closures of 

Now let $A$ be a linear operator in $\Hte + \Hte_T + \Hte_T^\prime$, 
%(generally, $\dom (A) \subsetneqq \Hte + \Hte_T + \Hte_T^\prime$)
let $\vphi$ be a vector in $\Hte_T$, and let
$f (\cdot)$ be a function with values in $\Hte_T^\prime$.
Consider an abstract kinetic equation 
\begin{gather}
T \frac{d \psi}{ dx} = - A \psi (x) + f(x)\qquad (0<x<\tau \leq \infty) ,
          \label{e TE} 
\end{gather}
supplemented by "half-range" boundary conditions in the form
\begin{gather}
Q_+ \psi (0) = Q_+ \vphi  , \qquad \label{e TE BC0} \\ 
Q_- \psi (\tau) = Q_- \vphi  \quad \text{if} \quad \tau < \infty,  \qquad \text{or}
\quad \| \psi (x) \|_{\Hte_T} = O(1) \qquad (x \rightarrow +\infty) \quad \text{if} \quad \tau=\infty .
        \label{e TE BCtau}
\end{gather}

In the abstract kinetic theory the operator $A$ is called \emph{a collision operator}. 
It may have any of a number of properties (see \cite{KLH82,GvdMP87}).

Here we consider %are interested only in 
the case when \emph{$A$ is a positive self-adjoint operator from 
$\Hte_T$ to $\Hte_T^\prime$}. 
%(cf. \cite{B79}, \cite[Section 2]{GvdMP87}). 
That is, 
\begin{description}
\item[(A1)] $\dom (A) \subset \Hte_T$, and $Ah \in \Hte_T^\prime$ for all $h \in \dom (A)$,
\item[(A2)] $\btel A h , h \bter > 0$ for all $ h \in \dom (A) \setminus \{ 0\}$,
\item[(A3)] $A=A^*$, i.e., $\dom (A)$ coincides with the set of all $g \in \Hte_T$ such that the mapping $h \mapsto
\btel Ah,g \bter $ is a continuous linear functional w.r.t. the $\Hte_T$-norm.
\end{description}

We seek \emph{$\Hte_T$-strong solutions} (week solutions in terms of \cite[Section 2]{GvdMP87}) of problem \eqref{e TE}-\eqref{e TE BCtau}.% in the $\Hte_T$ setting. 
That is, it is supposed that $d/dx$ is the strong derivative in $\Hte_T$ and that
\begin{description}
\item[(i)] $\psi ( \cdot ) \in C([0,\tau]; \Hte_T)$ (or $\psi ( \cdot ) \in C([0,+\infty); \Hte_T) $ if $\tau = \infty$),
\item[(ii)] $\psi ( \cdot ) \in C^1 ((0,\tau); \Hte_T )$ and 
%\item[(iii)] 
$\psi (x) \in \dom (A)$ for $x \in (0,\tau)$. 
\end{description}

\begin{theorem} \label{t TEFin}
Let $B$ be the operator in $\Hte_T$ defined by 
$B :=T^{-1}A $, $\dom (B) = \dom (A)$.   
Assume that the function $f$ is H\"older continuous on all finite intervals $[0,x_1]$, $x_1>0$, w.r.t. the $\Hte_T^\prime$-norm
(cf. \eqref{e asHol}), and that 
\begin{gather} \label{e asBsym}
B \quad \text{is similar to a self-adjoint operator in} \quad \Hte_T.
\end{gather}
Then problem \eqref{e TE}-\eqref{e TE BC0}-\eqref{e TE BCtau} has 
a unique $\Hte_T$-strong solution for each $\vphi \in \Hte_T$. 
\end{theorem}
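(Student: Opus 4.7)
The plan is to reduce equation \eqref{e TE} to the already-treated abstract equation \eqref{e NonHomEq} by taking $H := \Hte_T$ as the ambient Hilbert space and rewriting the problem in the form $d\psi/dx = -JL\psi + T^{-1}f$. The natural signature operator is $J := Q_+ - Q_-$, whose extension to $\Hte_T$ is a bounded self-adjoint involution by \eqref{e QT=TQ} and the definition of the norms $\| \cdot \|_T$, $\| \cdot \|_T^\prime$; the orthogonal decomposition $\Hte_T = Q_+\Hte_T \oplus Q_-\Hte_T$ corresponds to $H = H_+ \oplus H_-$ from Section \ref{s Jpos}. Since $T^{-1} = J|T|^{-1}$ and $|T|^{-1}$ is an isometry from $\Hte_T^\prime$ onto $\Hte_T$, the operator $T^{-1}$ maps $\Hte_T^\prime$ isometrically onto $\Hte_T$ as well, so the right-hand side $T^{-1}f$ inherits H\"older continuity in $\Hte_T$ from the H\"older continuity of $f$ in $\Hte_T^\prime$.

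Next I would verify that $L := |T|^{-1}A$ is a positive self-adjoint operator in $\Hte_T$ with $\ker L = \{0\}$. The key identity, valid for $h \in \dom(A)$ and $g \in \Hte_T$ obtained by a density/duality argument, is
\begin{equation*}
\btel L h , g \bter_T = \btel |T|(|T|^{-1}A h), g \bter = \btel A h , g \bter,
\end{equation*}
where the right-hand side is interpreted via the pairing that identifies $\Hte_T^\prime$ with $\Hte_T^*$. Positivity of $L$ in $\Hte_T$ then follows from (A2), self-adjointness of $L$ in $\Hte_T$ follows from (A3) (which is precisely the statement that $A: \Hte_T \to \Hte_T^\prime$ has a maximal symmetric-in-the-pairing domain), and $\ker L = \{0\}$ is automatic from positivity. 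Consequently $B = T^{-1}A = J|T|^{-1}A = JL$ fits the framework of Theorem \ref{t EU}, and the assumption \eqref{e asBsym} is exactly the similarity hypothesis used there.

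With these identifications, problem \eqref{e TE}, \eqref{e TE BC0}, \eqref{e TE BCtau} becomes problem \eqref{e NonHomEq}, \eqref{e PsiBC} (in the case $\tau < \infty$) or \eqref{e NonHomEq}, \eqref{e BCat0}, \eqref{e BCatInf} (in the case $\tau = \infty$), posed in the Hilbert space $\Hte_T$ with the signature operator $J$, the positive self-adjoint operator $L$, the inhomogeneity $T^{-1}f$, and boundary data $\vphi_\pm := Q_\pm \vphi \in Q_\pm \Hte_T$. Invoking Proposition \ref{p NHfin} for $\tau < \infty$ (respectively Theorem \ref{t EU Jpos} and Proposition \ref{p NHinf} for $\tau = \infty$, supplemented by integrability of $f$ at infinity when required) yields existence and uniqueness of the desired $\Hte_T$-strong solution, and the regularity properties (i)--(ii) transfer verbatim from the conclusions of those statements.

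The main obstacle is the rigorous justification of Step~2, namely the claim that $L = |T|^{-1}A$ is self-adjoint in the Hilbert space $\Hte_T$. This requires carefully exploiting the canonical isometric isomorphism $\Hte_T^\prime \cong \Hte_T^*$ established via the $\Hte$-pairing in the preamble to Section \ref{s AKE}, together with the fact that $|T|$ and $|T|^{-1}$ extend to mutually inverse isometries between $\Hte_T$ and $\Hte_T^\prime$. Once this is in place, the reduction to Theorem \ref{t EU} is essentially formal, and only a final check is needed that the notion of $\Hte_T$-strong solution of \eqref{e TE} coincides, after the substitution $\psi \mapsto \psi$ and after applying $T^{-1}$ to both sides, with the notion of strong solution of \eqref{e NonHomEq} used in Section \ref{ss cor n-h}.
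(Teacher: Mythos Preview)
Your proposal is correct and follows essentially the same route as the paper: set $J=Q_+-Q_-$ on $\Hte_T$, observe that $L:=JB=|T|^{-1}A$ satisfies $\btel Lh,g\bter_T=\btel Ah,g\bter$ so that (A1)--(A3) give $L=L^*>0$, and then reduce \eqref{e TE}--\eqref{e TE BCtau} to \eqref{e NonHomEq} and invoke Propositions~\ref{p NHfin} and~\ref{p NHinf}. The ``obstacle'' you flag is handled exactly by that one-line identity, so there is no genuine difficulty there; your observation that $T^{-1}$ is an isometry $\Hte_T^\prime\to\Hte_T$ (hence $T^{-1}f$ is H\"older in $\Hte_T$) and your remark about the extra integrability hypothesis needed for $\tau=\infty$ are both to the point.
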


\begin{proof}
Put $P_\pm := Q_\pm \uph \Hte_T$. Then
$P_+$ and $P_-$ are mutually complementary orthogonal projections 
in $\Hte_T$ and $J=P_+ - P_-$ is a signature operator.

Note that $L:=JB$ is a positive self-adjoint operator in $\Hte_T$
(that is, $B$ is J-positive and J-self-adjoint). Indeed,
since $L=(Q_+ - Q_-)T^{-1}A$ and $(Q_+ - Q_-) T^{\pm 1} = |T|^{\pm 1}$, we get
\begin{gather} \label{e L=A}
\btel  L  h, g  \bter_T = \btel |T| (Q_+ -Q_-) T^{-1} A h,  g  \bter = \btel   A h,  g  \bter , \qquad h \in \dom (A), \ g \in \Hte_T . 
\end{gather}
Hence (A1)--(A3) implies that $L=L^*>0$. 

So problem \eqref{e TE}-\eqref{e TE BCtau}
is reduced to correspondent problems for equation \eqref{e NonHomEq}.
Thus Theorem \ref{t TEFin} follows from Propositions \ref{p NHfin} and \ref{p NHinf}. 
\end{proof}

The form of the solutions is described by Propositions \ref{p NHfin}, \ref{p NHinf} 
and formula \eqref{e sol}.

\begin{remark} \label{r A3}
Actually, assumption (A3) is equivalent to the self-adjointness of the operator $L=(Q_+ - Q_-)B$ in $\Hte_T$. Note also that (A3) follows from (A1),(A2) and  \eqref{e asBsym}.
\end{remark}

Indeed, \eqref{e asBsym} implies that $L$ is closed and 
$\dom (L) (= \dom (B))$ is dense in $\Hte_T$. 
Assumption (A2) implies that $L>0$.
Assume that $L \neq L^*$.
Then $L$ admits a self-adjoint extension $\wt L \geq 0$, and $\wt L \supsetneqq L$.
Further, the operator $\wt B :=J \wt L$ is a J-non-negative J-self-adjoint extension of $B$.
Therefore \cite[Theorem II.3.25]{AzJ89} implies $\sigma_p (\wt B) \subset \R$. 
But \eqref{e asBsym} yields 
that $\ran (B - \la I) = \Hte_T$ for all $\la \in \C \setminus \R$. 
From this and $\wt B \supsetneqq B$, one gets $\C \setminus \R \subset \sigma_p (\wt B)$. 
This contradiction concludes the proof.

\section{Examples \label{s ex}}

If the spectrum $\sigma(B)$ is real and discrete, 
then assumption \eqref{a sim} is 
equivalent to the Riesz basis property for eigenfunctions of $B$.
For ordinary and partial differential operators with indefinite weights,
the Riesz basis property was studied in great detail 
(see \cite{KLKZ84,Pyat84,B85,CL89,F96,Pyat02,Par03} and references therein).
Below we consider several classes of differential equations with $B(=JL)$ such that $\sigma (B) \neq \overline{\sigma_{\disc} (B)}$. The theorems obtained in the previous sections are combined with known similarity results for Sturm-Liouville operators with an indefinite weight.
First, we consider in details a nonhomogeneous version of equation \eqref{e |mu|p}.  
%First, we consider in details a class of nonhomogeneous equations which includes the equation
 %\eqref{e |mu|p}.  
Other applications will be indicated briefly (for homogeneous equations and the case $\tau < \infty$ only). Using Propositions \ref{p NHfin} and \ref{p NHinf}, one can extend these treatments on the half-space problems and the nonhomogeneous case.

\subsection{The equation $ (\sgn x)|\mu|^\alpha \psi_x= \psi_{\mu\mu}+f$}

Let us consider the equation 
\begin{gather} \label{e mupf}
(\sgn \mu) |\mu |^\alpha \frac {\partial \psi }{\partial x} (x,\mu ) = 
\frac {\partial^2 \psi}{\partial \mu^2} (x, \mu ) %- k \psi (x,\mu )
+ f (x,\mu)  
\qquad ( 0<x<\tau \leq \infty, \ \mu \in \R ), 
\end{gather}
where 
%$k \geq 0$ and 
$\alpha > -1$ is a constant. 
%If $f \equiv 0$ and $k=0$, one get equation \eqref{e |mu|p}.
In the case $\tau<\infty$, the associated boundary conditions 
take the form
\begin{gather} \label{t mupBCfin}
\psi (0, \mu) = \vphi (\mu) \qquad \text{if} 
\quad \mu >0 , \qquad
\psi (\tau, \mu) = \vphi (\mu) \qquad \text{if} 
\quad \mu <0. 
\end{gather}
If $\tau=\infty$, we should change them to
\begin{gather} \label{t mupBCinf}
\quad \psi (0, \mu) = \vphi (\mu) \qquad \text{if} 
\quad \mu >0 , \qquad
\int_\R | \psi (x, \mu) |^2 |\mu|^\alpha d\mu  = O(1) \quad \text{as} \quad x \to +\infty.
\end{gather}

To write \eqref{e mupf} in the form \eqref{e TE}, one can put $\Hte = L^2 (\R)$,
$(T y) (\mu) = (\sgn \mu) |\mu |^\alpha y (\mu)$ 
and \\
$A : y \mapsto  -y'' $. 
Then, 
%$\Hte_T = L^2 (\R, |\mu|^p d\mu)$, $\Hte_T^\prime = L^2 (\R, |\mu|^{-p} d\mu)$, 
\[
\Hte_T = L^2 (\R, |\mu|^\alpha d\mu), \qquad \Hte_T^\prime = L^2 (\R, |\mu|^{-\alpha} d\mu).  
\]
%the operator $T$ is self-adjoint in the spaces $\Hte$, $\Hte_T $, 
%$\Hte_T^\prime $, and is isometric from $\Hte_T $ onto
%$\Hte_T^\prime $. 
It is assumed that $A$ is an operator from $\Hte_T $ to
$\Hte_T^\prime $ and that it is defined on the natural domain
\[
\dom (A) = \{y \in \Hte_T: y,y' \in AC_{\loc} (\R) \quad \text{and} \quad y'' \in \Hte_T^\prime \}.
\]
One can find the operators $Q_\pm$ (see Section \ref{s AKE}) and check that $J:= (Q_+ - Q_-) \uph \Hte_T$ coincides with $J$ defined by \eqref{e HJ}. Consider the operators $B:=T^{-1} A$ and $L:=JB$.
Both operators are defined on $\dom (A)$ by the following
differential expressions
\[
B : y(\mu) \mapsto  -\frac {(\sgn \mu)}{ |\mu |^\alpha} y''(\mu) %+ky(\mu)
,
\qquad L : y(\mu) \mapsto  -\frac 1{ |\mu |^\alpha} y''(\mu) %+ky(\mu)
.
\]
Clearly, $By, Ly \in \Hte_T$ for all $y \in \dom (A)$. So $B$ and $L$ are operators in $\Hte_T$. 
It is easy to check (see e.g. \cite{FN98}) that 
\begin{gather} \label{e Lp>0}
L \quad \text{is a positive self-adjoint
operator in} \quad  \Hte_T.
\end{gather}
It follows from \eqref{e Lp>0}, Remark \ref{r A3} and  \eqref{e L=A} that conditions (A1)-(A3) from Section \ref{s AKE} are fulfilled for the operator $A$.
It was proved in \cite{FN98} (see also \cite{CN96,KarMFAT00,KosMFAT06})
that the operator $B$ is similar to a self-adjoint
operator in the Hilbert space $L^2 (\R, |\mu|^\alpha d\mu)$.
By Theorem \ref{t TEFin}, we obtain the following result. 

\emph{Assume that $\int_\R |f(x,\mu)|^2 |\mu|^{-\alpha} d\mu < \infty$, $x \in (0,\tau)$, and that $f(x)$ is H\"older continuous on all finite intervals $[0, x_1 ]$ (see \eqref{e asHol}) 
as a function with values in $L^2 (\R, |\mu|^{-\alpha} d\mu)$; 
%for certain constants $K>0$ and $k \in (0,1]$}
%\[
%\int_\R \frac{|f(x,\mu) - f(y,\mu)|^2} {|\mu|^{\alpha}} d\mu < K %|x-y|^k, \quad 
%\text{for all} \quad 
%x,y \in (0,\tau),
%\] \emph{
then
there is a unique solution of problem \eqref{e mupf}, \eqref{t mupBCfin} 
(problem \eqref{e mupf}, \eqref{t mupBCinf}, in the case $\tau=+\infty$) for every $\vphi \in L^2 (\R, |\mu|^\alpha d\mu)$.
}

\begin{remark}
In the case $\tau < \infty$ and $\alpha=0$, problem \eqref{e mupf}-\eqref{t mupBCfin} was considered in \cite{T85} under additional assumptions that 
$\vphi$ belongs to a certain H\"older class. The half-space problem ($\tau = \infty$)  was studied in \cite{Pag74,G75} (see also remarks in \cite[Appendix II]{Pag76}). More precisely, in \cite{Pag74}, the homogeneous equation was considered for all $\alpha>-1$ under the assumption 
%\[
$\int_{\R_+} ( |\vphi %(\mu)
|^2 |\mu|^{\alpha}+|\vphi'%(\mu)
|^2) d \mu < \infty. 
$
%\]
In \cite{G75}, the nonhomogeneous case was considered for $\alpha=1$ and 
$\vphi (\cdot )$, $f(\cdot )$ from certain classes of continuous functions. Explicit integral representations for solutions were obtained in \cite{Pag74,G75,T85}. Note also that Eq. \eqref{e mupf}
was studied in \cite{BG68} for $\mu$ in a finite interval $[-a,a]$, however, the latter makes the spectrum of $B$ discrete. 
\end{remark}

\subsection{The case when $L$ is uniformly positive}

Consider equation \eqref{e PsiEq} under the assumption $L=L^*>\delta>0$, i.e.,
the operator $L$ is uniformly positive in the Hilbert space $H$. 
As before, put $B=JL$, where $J$ is a signature operator in $H$. 
In this case, %$0 \in \rho(B)$ and therefore 
%$0$ is not a critical point of $B$.
%By Proposition \ref{p cr=sim}, 
$B$ is similar to a self-adjoint operator iff $\infty$
is not a singular critical point of $B$ (see Proposition \ref{p cr=sim}). 
For ordinary differential operators with indefinite weights, the regularity of the critical point $\infty$ is well studied even in the case of a finite number of turning points (i.e., the points where the weight $w$ changes sign).
We will use one result that follows from \cite{CL89}.

Let the functions $w,p,q$ be such that
\begin{gather} \label{e coef}
w, q \in L^1_{\loc} (\R), \qquad p(\mu)>0 \quad \text{a.e. on} \quad \R, \qquad \text{and} \quad p^{-1}, p \in L^\infty_{\loc} (\R).
\end{gather}
Assume that the maximal operator
\begin{gather*} %\label{e oper}
%\text{} 
\quad L:y \mapsto \frac 1{|w|} (-(py')'+qy) \quad \text{is self-adjoint in the Hilbert space} \quad  L^2 (\R, |w(\mu)| d\mu),
\end{gather*}
i.e., it is in the limit point case both at $+\infty$ and $-\infty$.
Assume also that the sets
\[
\mathcal{I}_+ := \{ \mu \in \R : w(\mu) >0\} \quad \text{and} \quad
\mathcal{I}_- := \{ \mu \in \R : w(\mu) <0\}
\]
are both of positive Lebesgue measure. 

The elements of the set
$\overline{\mathcal{I}_+} \cap \overline{\mathcal{I}_-} $
are called
\emph{turning points of} $w$.

Put $(Jf)(\mu):= (\sgn w(\mu)) f(\mu)$ for $f \in  L^2 (\R, |w(\mu)| d\mu)$,
and $B=JL$. Then 
\[
B:y \mapsto \frac 1{w} (-(py')'+qy) 
\]
is a J-self-adjoint operator in $L^2 (\R, |w(\mu)| d\mu)$.

The following definition is an improved version of Beals' condition \cite{B85}.

\begin{definition}[\cite{CL89}] \label{d simple}
A  function $w$ is said to be
\emph{simple from the right at} $\mu_0$ if there exist $\delta>0$ such that $w$ is nonnegative (nonpositive) on $[\mu_0,\mu_0+\delta]$ and 
\begin{gather} \label{e simple}
w (\mu)= (\mu -\mu_0)^{\beta} \rho (\mu) \qquad
(w (\mu)= -(\mu -\mu_0)^{\beta} \rho (\mu), \ \text{respectively})
\end{gather}
holds a.e. on $[\mu_0,\mu_0+\delta]$ with some $\beta >-1$,
$\rho \in C^1 [\mu_0, \mu_0+\delta]$, $\rho(\mu_0)>0$.
%, and $\rho' (\mu_0+0)=0$
A function $w$ is said to be \emph{simple from the left at} $\mu_0$ if
the function $\mu \mapsto w(-(\mu - \mu_0)+\mu_0)$ is simple from the right at
$\mu_0$. A function $w$ is said to be \emph{simple at} $\mu_0$ if it is simple from the right and simple from the left at $\mu$ (with, possibly, different numbers $\beta$).
\end{definition}

It follows from the results of \cite{CL89} that $\infty$ is a regular critical point of $B$ if the following assumptions are fulfilled:
\begin{description}
\item[(i)] the function $w$ has a finite number of turning points at which it is simple,
\item[(ii)] $L=L^*\geq0$ and $\rho(B)\neq \emptyset$. 
\end{description}

For the proof, we note that the set $\mathcal{D}[JB]=\mathcal{D}[L]$ is separated 
(in terms of \cite[Subsection 3.2]{CL89}) since $L$ is in the limit point case at $+\infty$ and $-\infty$. So the case (i) of \cite[Theorem 3.6]{CL89} holds for $B$.

If, additionally, $L>\delta>0$ then $0 \in \rho(L)$ and, consequently, $0 \in \rho(B)$ since $J$ is a unitary operator.
Besides, $0$ is not a critical point of $B$. Hence one can obtain the following statement from Proposition \ref{p cr=sim} and \cite[Theorem 3.6]{CL89}.

\begin{proposition} \label{p CPSL}
Assume that the function $w$ has a finite number of turning points at which it is simple, and that $L=L^*>\delta> 0$. Then the operator $B$ is similar to a self-adjoint operator in
$L^2 (\R, |w(\mu)| d\mu)$.
\end{proposition}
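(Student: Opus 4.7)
The plan is to reduce the assertion to the criterion of Proposition \ref{p cr=sim}, namely to check that neither $0$ nor $\infty$ is a singular critical point of the J-positive J-self-adjoint operator $B$. Because the author has already identified the two tools needed (the discussion immediately preceding the proposition and Theorem~3.6 of \cite{CL89}), my proof will be a clean assembly of those ingredients.

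\textbf{Step 1 (the point $0$).} Since $L=L^*>\delta>0$, we have $0\in\rho(L)$, and the identity $B^{-1}=L^{-1}J$ together with the unitarity of $J$ in $H=L^2(\R,|w|\,d\mu)$ yields $0\in\rho(B)$. In particular $0\notin\sigma(B)$, so the spectral function $E^B(\Delta)$ vanishes on small neighborhoods of $0$, and by definition $0$ cannot be a critical point, hence certainly not a singular one.

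\textbf{Step 2 (the point $\infty$).} This is the substantive step. I would invoke the discussion already laid out before the proposition: because $L$ is in the limit-point case at both $\pm\infty$ (this is part of the standing assumption that $L$ is self-adjoint on the maximal domain), the form domain $\mathcal{D}[JB]=\mathcal{D}[L]$ is \emph{separated} in the sense of \cite[Subsection 3.2]{CL89}. The hypothesis that $w$ has only finitely many turning points, each of which is simple in the sense of Definition~\ref{d simple}, together with $L\geq 0$ and $\rho(B)\neq\emptyset$ (guaranteed by Step~1), places us in case (i) of \cite[Theorem 3.6]{CL89}. That theorem then asserts exactly that $\infty$ is a regular critical point of $B$. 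Thus $\infty$ is not a singular critical point.

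\textbf{Step 3 (conclusion).} With both $0$ and $\infty$ excluded from the set of singular critical points of $B$, the equivalence (i)$\Leftrightarrow$(ii) in Proposition~\ref{p cr=sim} immediately gives that $B$ is similar to a self-adjoint operator in $L^2(\R,|w(\mu)|\,d\mu)$.

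The only potential obstacle is a bookkeeping issue in Step~2: verifying that the hypotheses of \cite[Theorem 3.6]{CL89} are matched exactly (in particular, checking that the limit-point condition at both ends really does translate to separateness of $\mathcal{D}[L]$, and that $\rho(B)\neq\emptyset$ is available before the regularity statement is applied). Both of these are essentially quoted verbatim from the remarks immediately preceding the proposition, so the proof is a short assembly rather than a genuine calculation.
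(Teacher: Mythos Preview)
Your proof is correct and follows essentially the same route as the paper's own argument: use $L>\delta>0$ to get $0\in\rho(B)$ (hence $0$ is not a critical point), invoke \cite[Theorem~3.6]{CL89} via the separated form domain and the simplicity of the turning points to conclude that $\infty$ is a regular critical point, and then apply Proposition~\ref{p cr=sim}. The assembly is accurate and matches the paper's reasoning almost line for line.
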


Thus, if the conditions of Proposition \ref{p CPSL} are satisfied,  
Theorem \ref{t EU} implies that there is a unique strong solution of the corresponding 
boundary problem \eqref{e PsiEq}-\eqref{e PsiBC} (cf. \cite[Section 4]{GGvdM88}).

Note that the condition $L>\delta>0$ is fulfilled whenever 
\begin{equation} \label{e q/w}
\frac {q(\mu)} {|w(\mu)|} > C >0, \quad \text{where} \ C \ \text{is a constant (cf. \cite[Appendix I]{Pag76})}.
\end{equation}

\begin{example}[cf. \cite{PagT71}]
Consider the equation 
\begin{gather} \label{e mupfk}
(\sgn \mu) |\mu |^\alpha \frac {\partial \psi }{\partial x} = 
\frac {\partial^2 \psi}{\partial \mu^2} - k \psi 
\qquad ( 0<x<\tau < \infty, \ \mu \in \R ), 
\end{gather}
where $k > 0$ and $\alpha \in (-1,0]$ are constants. 
Evidently, the weight function $w(\mu)=(\sgn \mu) |\mu |^\alpha$ is simple in its only turning point $0$. Besides, condition \eqref{e q/w} is satisfied since $\alpha \in (-1,0]$.
So problem \eqref{e mupfk}, \eqref{t mupBCfin} has a unique solution 
for every $\vphi \in L^2 (\R, |\mu|^\alpha d\mu)$.
\end{example}

\begin{remark}
The half-space problem for the equation \eqref{e mupfk} with $\alpha=1$ and $k>0$ was studied in 
\cite{Pag75}. Note that if $\alpha>0$, the operator $L$ associated with Eq. \eqref{e mupfk} is not uniformly positive. To extend the method of the present paper to the case $\alpha>0$,
one should prove that $0$ is a regular critical point of the operator $B:y \mapsto  (\sgn \mu) |\mu |^{-\alpha} (-y''+ky) $ (cf.  Subsection \ref{ss FPe}).
\end{remark}

Improvements of condition \eqref{e simple} may be found in 
\cite{CL89} (the end of Subsection 3.1) and \cite{F96}. Note also that \cite[Theorem 3.6]{CL89} 
is valid for higher order ordinary differential operators.

%\begin{example}[cf. ]
%Consider the equation 
%\begin{gather} \label{e mupfk}
%w(\mu) \frac {\partial \psi }{\partial x} = 
%\frac {\partial}{\partial \mu} \left( p (\mu) {\partial}{\partial \mu} \psi \right) 
%- q (\mu ) \psi 
%\qquad ( 0<x<\tau < \infty, \ \mu \in \R ), 
%\end{gather}
%where the functions $$ and $$ are constants. 
%Evidently, the weight function $w(\mu)=(\sgn \mu) |\mu |^\alpha$ is simple in its only turning point %$0$. Besides, condition \eqref{e q/w} is satisfied since $\alpha \in (-1,0]$.
%So problem \eqref{e mupfk}, \eqref{t mupBCfin} has a unique solution 
%for every $\vphi \in L^2 (\R, |\mu|^\alpha d\mu)$.
%\end{example}

\subsection{The Fokker-Plank equation}\label{ss FPe}

%Let assumptions \eqref{e coef}, \eqref{e oper} be satisfied. Assume additionally that 
%$\mu w (\mu)>0$  for a.~a. $\mu \in \R$. The letter implies that $0$ is an only turning %point of $w$.

In the case when $inf \, \sigma_{ess} (L) =0$, the similarity problem for the operator 
$B:y \mapsto \frac 1{w} (-(py')'+qy)$ 
%, $\dom (B) := \dom (L)$, 
is more difficult.
This question was considered in 
\cite{CN96,FN98,KarMFAT00,FSh02,KarM04,KosMFAT06,KarM06,KosMZ06,KarKos06} (see also references therein). 
A general method was developed in 
\cite{KarM04,KarM06},
where the operator $B$ with \\
$w(\mu)= \sgn \mu$ and $p(\mu) \equiv 1$ was studied.
However, all results contained in \cite[Sections 3-6]{KarM06} are valid without changes in proofs for the general Sturm-Liouville operator $B$ with one turning point.
The approach of \cite{KarM06} was applied to the case $q\equiv 0$ in \cite{KosMZ06,KosDis,KarKosM06},
where the following theorem was proved.

\begin{theorem}[\cite{KosMZ06,KosDis,KarKosM06}] \label{t Kos}
Assume that $w \in L^1_{\loc} (\R)$, $\mu w (\mu)>0$  for a.a. $\mu \in \R$,
and the function $w$ is simple at its only turning point $0$.
Assume also that  
$w$ has the form $w(\mu) = \pm r(\mu) |\mu|^{\alpha_\pm}$ 
for $\mu \in \R_\pm$, 
%\begin{gather} \label{e w=r}
%w(\mu) = r(\mu) |\mu|^{\alpha_+} \qquad \text{for} 
%\quad \mu >0 , \qquad
%w(\mu) = -r(\mu) |\mu|^{\alpha_-} \qquad \text{for} 
%\quad \mu <0, 
%\end{gather}
where the function $r$ satisfies the following conditions 
\begin{gather} \label{e rCond}
\int_1^{+\infty} \mu^{\alpha_+/2} |r(\mu) - c_+| d\mu < \infty, \quad
\int_{-\infty}^{-1} |\mu|^{\alpha_-/2} |r(\mu) - c_-| d\mu < \infty,
\end{gather}
and $\alpha_\pm >-1$, $c_\pm >0$ are constants.
Then the operator $B:y \mapsto -\frac 1{w} y''$ defined on its maximal domain in 
$L^2 (\R, |w(\mu)| d\mu)$ is similar to a self-adjoint operator.
\end{theorem}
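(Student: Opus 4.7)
The plan is to apply Proposition \ref{p cr=sim}: since the maximal operator $L:y\mapsto -\frac{1}{|w|}y''$ on $L^2(\R,|w|d\mu)$ is positive and self-adjoint (the limit-point condition at $\pm\infty$ is forced by $\mu w(\mu)>0$ together with $w\in L^1_{\loc}(\R)$), the operator $B=JL$ with $(Jf)(\mu)=(\sgn\mu)f(\mu)$ is J-self-adjoint and J-positive. Picking any $\lambda\in\C\setminus\R$ yields $\lambda\in\rho(B)$, so all hypotheses of Proposition \ref{p cr=sim} hold, and the task reduces to showing that neither $0$ nor $\infty$ is a singular critical point of $B$.

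I would first treat a reference operator $B_0:y\mapsto -\frac{1}{w_0}y''$ with $w_0(\mu):=\pm c_\pm|\mu|^{\alpha_\pm}$ for $\mu\in\R_\pm$. After a Liouville-type substitution, $B_0$ decouples into two one-sided pure-power problems for which an explicit generalized Fourier transform is available (this is the transform used in \cite{FN98,CN96} and appearing in the discussion after \eqref{e mupf}); matching the two sides at $\mu=0$ gives an explicit spectral representation of $B_0$. From that representation one reads off that $0$ and $\infty$ are regular critical points of $B_0$, so $B_0$ is similar to a self-adjoint operator.

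Next I would compare $B$ with $B_0$. On $\R_\pm$,
\[
\frac{1}{w(\mu)}-\frac{1}{w_0(\mu)}=\pm\,\frac{c_\pm-r(\mu)}{c_\pm\, r(\mu)\,|\mu|^{\alpha_\pm}},
\]
so $B-B_0=\bigl(\tfrac{1}{w_0}-\tfrac{1}{w}\bigr)\frac{d^2}{d\mu^2}$ is concentrated where $r$ differs from $c_\pm$. The tail is controlled by \eqref{e rCond}; near $0$, simpleness of $w$ forces $r\in C^1$ with $r(0)>0$, so $\frac{1}{w_0}-\frac{1}{w}$ stays comparable to $|\mu|^{-\alpha_\pm}$ times a smooth bounded factor that is small as $\mu\to0$. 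I would then invoke the resolvent-estimate criterion developed in \cite{KarM04,KarM06}, according to which regularity of a critical point $\alpha\in\{0,\infty\}$ of $B$ is equivalent to an integral resolvent bound of the form
\[
\sup_{\ep>0}\ep\int_I\|(B-\lambda\mp i\ep)^{-1}f\|^2\,d\lambda\leq C\|f\|^2\qquad(f\in H)
\]
on intervals $I$ adjacent to $\alpha$. Transporting such an estimate from $B_0$ to $B$ via the second resolvent identity, the weighted smallness of $\frac{1}{w}-\frac{1}{w_0}$ supplied by \eqref{e rCond} is exactly what keeps the perturbation term under control on the spectral windows of $B_0$ used in the bound.

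The main obstacle I expect is the critical point $0$: it is an accumulation point of $\sigma_{\ess}(B_0)$, the spectral density of $B_0$ is singular there, and the requisite resolvent bounds must be extracted from the explicit transform of $B_0$ with sharp constants before the perturbation argument can be inserted. Regularity at $\infty$ is technically easier because \eqref{e rCond} directly produces an integrable correction in the spectral representation. Once both critical points are shown to be non-singular, Proposition \ref{p cr=sim} gives the similarity of $B$ to a self-adjoint operator and completes the proof.
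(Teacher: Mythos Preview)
The paper does not prove Theorem~\ref{t Kos}; it is quoted from \cite{KosMZ06,KosDis,KarKosM06} and is used only as input for Theorem~\ref{t EU}. The sole argument the paper supplies after the statement is the short verification that $\ker L=0$ (constants and linear functions are not in $L^2(\R,|w|d\mu)$ under \eqref{e rCond}), which is needed to feed $L>0$ into Theorem~\ref{t EU}, not to establish similarity.

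That said, your outline is roughly in the spirit of what the paper says those references do, but with two discrepancies worth flagging. First, the paper separates the two critical points: regularity of $\infty$ is obtained from the \'Curgus--Langer local criterion \cite[Theorem 3.6]{CL89} (this is exactly what the simpleness assumption at the turning point is for; see the paragraph before Proposition~\ref{p CPSL}), not from a perturbation of a model operator. Only the critical point $0$ requires the machinery of \cite{KarM04,KarM06}, and there the tool is the asymptotics of the Titchmarsh--Weyl $m$-function rather than the $L^2$-resolvent bound you wrote; condition \eqref{e rCond} is precisely the hypothesis that controls the $m$-function near the real axis at low energy. Second, two of your preliminary claims need care: the limit-point case at $\pm\infty$ is not a consequence of $w\in L^1_{\loc}$ and $\mu w(\mu)>0$ alone (it follows here from the asymptotics $w(\mu)\sim \pm c_\pm|\mu|^{\alpha_\pm}$ implied by \eqref{e rCond}), and $\rho(B)\neq\emptyset$ is not automatic for a $J$-self-adjoint operator---it has to be checked before Proposition~\ref{p cr=sim} and the spectral function are invoked.
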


Evidently, the operator $L:y \mapsto -\frac 1{|w|} y''$ is nonnegative in $L^2 (\R, |w(\mu)| d\mu)$. Moreover, if condition %\eqref{e w=r}-
\eqref{e rCond} is fulfilled,
$L$ is positive. Indeed, any solution of the equation $Ly_0=0$ has the form $y_0(\mu) = c_1+c_2 \mu$, $\mu \in \R$, where $c_1$ and $c_2$ are constants. But it follows from %\eqref{e w=r}-
\eqref{e rCond} that  $y_0 \not \in L^2 (\R, |w(\mu)| d\mu)$ for all $c_1,c_2 \in \C$.

Applying Theorem \ref{t EU}, we obtain the existence and uniqueness theorem 
for the time-independent Fokker-Plank equation of the simplest kind (see e.g. \cite{S95} and also references in \cite{BP83})
\begin{gather}
\mu \frac {\partial \psi }{\partial x} (x,\mu ) =  
b(\mu) \frac {\partial^2 \psi}{\partial \mu^2} (x, \mu ) 
\qquad ( 0<x<\tau<\infty, 
\ \mu \in \R ).
\label{e FP} 
\end{gather}
Namely, \emph{if the assumptions of Theorem \ref{t Kos} are satisfied for the function $w(\mu)=b(\mu)\mu^{-1}$, then the boundary value problem \eqref{e FP}, \eqref{t mupBCfin} 
%associated with equation \eqref{e FP} 
has a unique solution for every $\vphi \in L^2 (\R, |w(\mu)| d\mu)$.} 

\begin{remark}
The case when $B$ is an ordinary differential operator of higher order and 
$inf \, \sigma_{ess} (L) =0$ was considered in \cite{CN96,KarMFAT00,KarMN00}.
For partial differential operators, see \cite{CN94,FL96,CN98,HKar05}.
\end{remark}

\textit{The author express his gratitude to V.A. Derkach, who drew author's 
attention to the papers \cite{B85,KvdMP87}; to M.M. Malamud, V.A.Marchenko, and H. Stephan for stimulating discussions about this circle of problems.
The author wishes to thank for the hospitality to the University of Zurich, 
the Johann Radon Institute for Computational and Applied Mathematics, and the University of Calgary, where various parts of this paper have been written.}

\quad\\
I. Karabash, 
\\Department of Mathematics and Statistics, University of Calgary, 2500 University Drive NW Calgary T2N 1N4, Alberta, Canada \\ 
and \\
Department of PDE, Institute of Applied Mathematics and Mechanics, R. Luxemburg str. 74, Donetsk 83114, Ukraine\\
e-mail: karabashi@yahoo.com, karabashi@mail.ru, karabash@math.ucalgary.ca
\end{document}